\documentclass[12pt]{amsart}

\usepackage{graphicx} % Required for inserting images
\usepackage[utf8]{inputenc}
\usepackage[margin=1in]{geometry}
\usepackage{amsmath}
\usepackage{amsfonts}
\usepackage{amsthm}
\usepackage{amssymb}
\usepackage{array}
\usepackage{cancel,xcolor}
\usepackage{multirow}
\usepackage{graphicx}
\usepackage{tikz}
\usepackage{comment}
\usepackage{dynkin-diagrams}
\usetikzlibrary{positioning}
\usepackage{array}
\usepackage{float}

\newcolumntype{C}[1]{>{\centering\arraybackslash}m{#1}}
\newcommand{\centertikz}[1]{%
\begin{minipage}{\linewidth}
\centering
#1
\end{minipage}
}

\newtheorem*{theorem*}{Theorem}

\newtheorem{theorem}{Theorem}[section]
\newtheorem{example}[theorem]{Example}

\newtheorem{coro}[theorem]{Corollary}
\newtheorem{lemma}[theorem]{Lemma}
\newtheorem{definition}[theorem]{Definition}
\newtheorem{proposition}[theorem]{Proposition}

\newtheorem{remark}[theorem]{Remark}
\newcommand{\Z}{\mathbb{Z}}

\usepackage{tabularray}

\usepackage{biblatex}
\begin{document}

\title{The Poincar\'e series of Coxeter folding subgroups}
\author{Camilo Villamil}
\address{Department of Mathematics, Oklahoma State University, 401 Mathematical Sciences, Stillwater, OK 74078}
\email{cvillam@okstate.edu}

\author{Edward Richmond}
\address{Department of Mathematics, Oklahoma State University, 401 Mathematical Sciences, Stillwater, OK 74078}
\email{edward.richmond@okstate.edu}

\maketitle

\begin{abstract}
    Folding subgroups give a way to realize non-simply-laced Coxeter groups as subgroups of simply-laced Coxeter groups.  In this paper, we study how folding subgroups of finite and affine type are distributed length-wise by calculating the length generating function of the subgroup with respect the length of the ambient group.  These generating functions have surprisingly nice formulas in terms of $q$-integers and give rise to interesting combinatorial identities on polynomials involving length statistics of both the ambient group and folding subgroup.
\end{abstract}

\section{Introduction}

Let $(W,S)$ be a Coxeter system and let $\ell:W\rightarrow \Z_{\geq 0}$ denote its length function.  Let $\widehat S$ be a set partition of $S$ such that for each $I\in\widehat S$, the parabolic subgroup $W_I=\langle s\ |\ s\in I \rangle$ is a finite subgroup.  Let $r_I$ denote the maximal length element in $W_I$ and define $\widehat W$ as the subgroup of $W$ generated by $R:=\{r_I \ |\ I\in \tilde S\}$. 
 We say the set partition $\widehat S$ is \textbf{admissible} if for every $w\in \widehat W$ and $I\in \widehat S$, we have either $\ell(ws)=\ell(w)+1$ for each $s\in I$ or  $\ell(ws)=\ell(w)-1$ for each $s\in I$.  The following theorem is proved by  M\"uhlherr in \cite{muhlherr1993coxeter}.
 
\begin{theorem}[M\"uhlherr, 1992]\label{thm:folding_subgroup}
    Let $(W,S)$ be as above and let $\widehat{S}$ be an admissible partition of $S$. Then the pair $(\widehat{W}, R)$ is a Coxeter system.    
\end{theorem}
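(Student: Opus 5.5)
We will establish the theorem by verifying the exchange condition for $(\widehat W,R)$, or equivalently the "deletion/folding" characterization of Coxeter systems: a group generated by involutions $R$ is a Coxeter system with respect to $R$ if there is a length-type function satisfying the exchange property. The natural candidate is to define, for each $r=r_I\in R$, the set
\[
\widehat D_r=\{w\in\widehat W \mid \ell(ws)<\ell(w) \text{ for all } s\in I\},
\]
and to use the criterion (cf.\ Bourbaki, Ch.~IV, \S1, Ex.~3 / the "Tits–Matsumoto" criterion): if $\widehat W$ is generated by involutions $R$ and there are subsets $P_r\subseteq \widehat W$ with (a) $1\in P_r$, (b) $P_r\cap P_r r=\emptyset$ and $P_r\cup P_r r=\widehat W$, and (c) for $w\in P_r$ and $r'\in R$ with $wr'\notin P_r$ one has $wr'=rw$, then $(\widehat W,R)$ is a Coxeter system. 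We take $P_r=\widehat W\setminus \widehat D_r$ — equivalently $P_r=\{w\in \widehat W: \ell(ws)>\ell(w)\ \forall s\in I\}$ — with the orientation adjusted to the left/right convention of the criterion (we work with left cosets by using $w\mapsto w^{-1}$ if needed).

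The key steps in order are as follows. First, admissibility says that for every $w\in\widehat W$ the set $I$ lies entirely in the right ascent set or entirely in the right descent set of $w$; hence $w\in P_r$ iff $w$ is the minimal-length representative of $wW_I$, and then $\ell(wr_I)=\ell(w)+\ell(r_I)$ by the standard theory of parabolic double cosets, so $wr_I$ has all of $I$ as descents. This gives (a) and (b) immediately: $P_r$ and $P_r r$ partition $\widehat W$. Second, we transfer to a length statement: define $\widehat\ell(w)$ as word length in $R$; we do not need it directly but note that $\ell$ is additive along words of $R$ which are ``reduced'' in the sense of the criterion, which will follow a posteriori.

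Third, and this is the main obstacle, is condition (c): if $w\in P_r$ but $wr'\in P_r r$, i.e.\ right multiplication by $r'=r_J$ flips $w$ from having $I$ as ascents to having $I$ as descents — we must show $wr'=r w$ in the appropriate convention. The cleanest way is geometric: work in the Tits cone/Coxeter complex of $W$, where $r_I$ acts on the residue of type $I$. Right-descent conditions translate into which side of the walls $H_s$ ($s\in I$) the chamber $w^{-1}C$ lies. The flip means some wall of the $I$-residue separates $w^{-1}C$ from $r'w^{-1}C$; since $r'$ is the longest element of $W_J$, every wall it crosses is a wall of $W_J$ through the base chamber's $J$-residue, and admissibility forces the whole $I$-family of walls to be crossed simultaneously. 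I would first try to show that the set of reflections $t$ with $\ell$-inversions for $w$ relative to $I$ (namely $wsw^{-1}$, $s\in I$) coincides with the reflections of $W_J$ conjugated, forcing $w W_I w^{-1}=W_J$ and $w r_I w^{-1}=r_J$, i.e.\ $r_J=w^{-1}r_I w$ after switching sides, which is exactly the exchange relation. The delicate point is deducing $wW_Iw^{-1}=W_J$ from one crossed wall; here we use admissibility again on the intermediate elements of $\widehat W$ and the fact that the longest element of a finite parabolic flips all its walls, plus the Deodhar-type lemma that if $w$ is minimal in $wW_I$ and $ws w^{-1}\in W_J$ for some $s\in I$ with $w$ minimal in $W_Jw$, then conjugation by $w$ sends $I$ into $J$. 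Combining this with the symmetric argument gives equality, completing (c) and hence the theorem.
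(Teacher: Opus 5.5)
The paper contains no proof of this theorem; it is quoted from M\"uhlherr. So there is nothing in the paper to compare against, and your proposal has to stand on its own.

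Your route is viable. Bourbaki's criterion with $P_r$ the set of $w\in\widehat W$ having every $s\in I$ as a right ascent works, and (a), (b) follow from admissibility and the coset factorization exactly as you say. But step (c), which carries all the content, is stated on the wrong side and is false as written. With $P_r$ defined by right ascents, the exchange must be tested by \emph{left} multiplication: if $w\in P_r$ and $r_Jw\notin P_r$, then $r_Jw=wr_I$. Your version (if $w\in P_r$ and $wr'\notin P_r$ then $wr'=rw$) already fails for the trivial partition. In $A_2$ with $I=J=\{s_2\}$ and $w=s_1$, one has $w\in P_r$ and $s_1s_2\notin P_r$, but $s_1s_2\neq s_2s_1$. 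For the same reason, the walls separating $w^{-1}C$ from $r'w^{-1}C$ need not be walls of $W_J$; in this example $H_{s_1}$ is one of them. The correct comparison is $w^{-1}C$ versus $w^{-1}r_JC$, whose separating walls are the $w^{-1}$-translates of the walls of $W_J$.

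In the correct orientation, (c) needs four steps, which your sketch either omits or misstates.

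First, admissibility applied to $r_Jw$ makes \emph{every} $s\in I$ a right descent. Hence $w\alpha_s$ lies in the inversion set of $r_J$, which is $\Phi_J^+$, and $wsw^{-1}\in W_J$ for all $s\in I$. Nothing needs to be deduced from a single crossed wall. Your Deodhar-type lemma with ``for some $s\in I$'' is false: take $w=e$, $I=\{s_1,s_3\}$, $J=\{s_1\}$ in $A_3$.

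Second, you assume $w$ is minimal in $W_Jw$ but never prove it. It follows from admissibility applied to $w^{-1}$: $J$ consists entirely of left ascents or entirely of left descents of $w$. In the latter case $w^{-1}=ur_J$ with $u$ minimal in $uW_J$, so $w^{-1}(\Phi_J^+)$ consists of negative roots. This contradicts $w^{-1}(w\alpha_s)=\alpha_s>0$.

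Third, writing $ws=(wsw^{-1})w$ gives $\ell(wsw^{-1})=\ell(ws)-\ell(w)=1$. So $K:=wIw^{-1}\subseteq J$ and $wr_Iw^{-1}=r_K$.

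Fourth, your ``symmetric argument'' should be made precise as follows. Since $w$ is minimal in $W_Jw$, the left descents of $wr_I=r_Kw$ lying in $J$ are exactly $K$. Admissibility applied to $(wr_I)^{-1}\in\widehat W$ then forces $K=J$, because $K\neq\emptyset$. Hence $r_Jw=wr_I$.

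With these repairs your argument proves the theorem.
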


We call the Coxeter subgroup $\widehat W$ from  Theorem \ref{thm:folding_subgroup}\textbf{ a folding subgroup of $W$}.  Folding subgroups appear naturally in the study of simple Lie groups and their associated Weyl groups.  For example, the natural embedding of the symplectic group into the special linear group $Sp(2n)\subset SL(2n)$ induces a folded embedding of the Weyl group of $Sp(2n)$ into the Weyl group of $SL(2n)$. 
 The term ``folding" comes from the way the set partition $\widehat S$ collects vertices in the Coxeter graph of $W$ to obtain the Coxeter graph of $\widehat W$.  The folding of Coxeter groups is a natural consequence of the folding of root systems which were described by Steinberg in \cite{steinberg1967lectures} for finite crystallographic root systems and later by Lusztig for general finite root systems in \cite{lusztig1983some}.

 %(see Table \ref{DescriptionFiniteFoldingTable} for finite Coxeter foldings, and Table \ref{DescriptionAffineFoldingTable1} and Table \ref{DescriptionAffineFoldingTable2} for affine Coxeter foldings).\\

The folding process establishes a relation between the canonical generators of both $\widehat W$ and $W$. Since any element $w\in\widehat{W}$ can be expressed as a word in $R-$generators and in $S-$generators, we distinguish these expressions by considering the embedding map
$$\phi:\widehat W\rightarrow W.$$  
If $w$ is an expression in the generators in $R$, then $\phi(w)$ will denote the corresponding expression in the generators in $S$.  We call $\phi(w)$ the \textbf{unfolding} of $w$. We study how folding subgroups are distributed by length in their ambient group.  To measure this distribution, we make the following definition.

\begin{definition}
Let $W$ be a Coxeter group with folding subgroup $\widehat{W}$.  Define the \textbf{unfolding series}
    \begin{equation*}
        U_{\widehat W}^{W}(q):=\sum_{w\in \widehat W}q^{\ell(\phi(w))}.
    \end{equation*}
If $\widehat W$ is finite, then $U_{\widehat W}^{W}(q)$ is the \textbf{unfolding polynomial}.
\end{definition}

We focus on folding subgroups of finite and affine type and discuss each type separately in the following two subsections.

%In this paper, we study foldings of 
%Coxeter groups of both finite and affine type.

\subsection{Results for foldings of Coxeter groups of finite type}

%Irreducible Coxeter groups of finite type are %classified into four infinite families and seven %exceptional types.  

For irreducible Coxeter groups of finite type, folding pairs $(\widehat W, W)$ are completely classified  by Crisp in \cite{crisp1996injective}.  We list these foldings in Table \ref{FiniteFoldingTable}.  Here we use the notational conventions for the classification of finite Coxeter groups found in \cite[Appendix A1]{bjorner2005combinatorics}. 

%The folding pairs $(\tilde W, W)$ in the affine case %are listed in Table \ref{AffineFoldingTable}.\\

\begin{table}[ht]
    \centering
    \begin{tabular}{ccc}    
         & $\qquad\qquad\widehat{W}\qquad\qquad$ & $\qquad W\qquad$ \\
         \hline
        (i) & ${B}_n$ & $A_{2n-1}$, $A_{2n}$, $D_{n+1}$\\
        (ii) & $I_2(n)$ & $A_{n-1}$\\
        (iii) & $F_4$ & $E_6$\\
        (iv) & $H_3$ & $D_6$\\
        (v) & $H_4$ & $E_8$\\
        (vi) & $G_2$ & $D_4$\\
    \end{tabular}
    \vspace{10pt}
    \caption{Folding subgroups of finite Coxeter groups}
    \label{FiniteFoldingTable}
\end{table}

We study the unfolding polynomials in the four infinite families given in parts (i) and (ii) in Table \ref{FiniteFoldingTable} (note that parts (iii)-(vi) are not infinite families and involve Coxeter groups of exceptional Lie-type).  The Coxeter graphs of these families are given in Table \ref{tab:groupstable}.

\begin{table}[h!] 
   \centering 
   \begin{tabular}{
>{\centering\arraybackslash}m{0.03\linewidth}
>{\centering\arraybackslash}m{0.35\linewidth}
>{\centering\arraybackslash}m{0.03\linewidth}
>{\centering\arraybackslash}m{0.28\linewidth}
} 
    
   $A_n:$  \vspace{-15pt} & \centertikz{\begin{tikzpicture}[
      thick,
      acteur/.style={
        circle,
        fill=black,
        thick,
        inner sep=2pt,
        minimum size=0.2cm
      },
      baseline={(0,0.5)}
    ] 
      
\node (c1) at (8,2) [acteur,label=above:$s_{n-1}$]{};
      \node (c2) at (7,2)[acteur]{}; 
      \node (c3) at (6,2) [acteur,label=above:$s_2$]{}; 
      \node (c4) at (5,2) [acteur,label=above:$s_1$]{}; 
      \node (c5) at (9,2) [acteur,label=above:$s_n$]{}; 
%      \node (a6) at (2.5,0.5) [acteur,label=below:$s_{n+1}$]{};
%      \node (a7) at (1,0.5) [acteur]{}; 
%      \node (a8) at (-0.5,0.5) [acteur,label=below:$s_{2n-2}$]{}; 
%      \node (a9) at (-2,0.5) [acteur,label=below:$s_{2n-1}$]{}; 
      \draw (c1) -- (c2); 
      \draw[dashed] (c2) -- (c3); 
      \draw (c3) -- (c4);
      \draw (c1) -- (c5);
    \end{tikzpicture}} \vspace{-30pt}
    &
    $B_n:$  \vspace{-15pt}& \centertikz{\begin{tikzpicture}[
      thick,
      acteur/.style={
        circle,
        fill=black,
        thick,
        inner sep=2pt,
        minimum size=0.2cm
      },
      baseline={(0,0.5)}
    ] 
      
\node (c1) at (8,2) [acteur,label=above:$s_{n-1}$]{};
      \node (c2) at (7,2)[acteur]{}; 
      \node (c3) at (6,2) [acteur,label=above:$s_2$]{}; 
      \node (c4) at (5,2) [acteur,label=above:$s_1$]{}; 
      \node (c5) at (9,2) [acteur,label=above:$s_n$]{}; 
%      \node (a6) at (2.5,0.5) [acteur,label=below:$s_{n+1}$]{};
%      \node (a7) at (1,0.5) [acteur]{}; 
%      \node (a8) at (-0.5,0.5) [acteur,label=below:$s_{2n-2}$]{}; 
%      \node (a9) at (-2,0.5) [acteur,label=below:$s_{2n-1}$]{}; 
      \draw (c1) -- (c2); 
      \draw[dashed] (c2) -- (c3); 
      \draw (c3) -- (c4);
      \draw (c1) -- node[midway, below, sloped] {4} (c5);
    \end{tikzpicture}} \vspace{-30pt} \\ 
    $D_n:$ & \centertikz{\begin{tikzpicture}[
      thick,
      acteur/.style={
        circle,
        fill=black,
        thick,
        inner sep=2pt,
        minimum size=0.2cm
      },
      baseline={(0,-0.75)}
    ] 
      \node (a1) at (1,2) [acteur,label=$s_{n-2}$]{};
      \node (a2) at (0,2)[acteur]{}; 
      \node (a3) at (-1,2) [acteur,label=$s_2$]{}; 
      \node (a4) at (-2,2) [acteur,label=$s_1$]{}; 
      \node (a5) at (2,2.5) [acteur,label=above:$s_{n-1}$]{}; 
      \node (a6) at (2,1.5) [acteur,label=below:$s_{n}$]{};
      
      \draw (a1) -- (a2); 
      \draw[dashed] (a2) -- (a3); 
      \draw (a3) -- (a4);
      \draw (a1) -- (a5);
      \draw (a1) -- (a6);
      
      %\draw (a5)--(a6);
%     \draw (a7) -- (a5);
%     \draw[red] (a9) -- (a5);
%     \draw[red] (a9) -- (a4);
%      \draw[red] (a5) -- (a3);
%      \draw[red] (a6) -- (a2);

    \end{tikzpicture}}  & 
    $I_2(n):$ & \centertikz{\begin{tikzpicture}[
      thick,
      acteur/.style={
        circle,
        fill=black,
        thick,
        inner sep=2pt,
        minimum size=0.2cm
      },
      baseline={(0,0.5)}
    ]         
        \node (t1) at (0,0)[acteur,label=below:$s_1$]{};
        \node (t2) [right of=t1][acteur,label=below:$s_2$]{};

        \draw (t1)--node[midway, above, sloped] {$n$}(t2);            
    \end{tikzpicture}}
   \end{tabular} 
   \vspace{-40pt} % adjust this value as needed
   \caption{Coxeter graphs of finite Coxeter groups} 
   \label{tab:groupstable} 
\end{table} 
For the folding subgroups in part (i), the partition $\widehat{S}$ comes from the orbits of the unique nontrivial automorphism on the Coxeter graph of $W$. Such an automorphism induces a bijection $\sigma: S\rightarrow S$ which extends uniquely to an automorphism of $W$ that preserves the length. In this case, the folding subgroup $\widehat{W}=\langle R\rangle$ corresponds to the subgroup of $W$ consisting of the elements fixed by $\sigma$.  We refer to this type of folding as a \textbf{classical folding}. 
 For a reference on classical foldings and their associated root systems, see \cite{steinberg1967lectures}.  We remark that the foldings found in parts (iii) and (vi) in Table \ref{FiniteFoldingTable} are also classical foldings.  \\

The embeddings in part (ii) corresponds to a Lusztig partition of $S$ that is not induced by an automorphism on the Coxeter graph. We refer to this type of folding as a \textbf{Lusztig folding}.  The foldings found in parts (iv) and (v) in Table \ref{FiniteFoldingTable} are also examples of Lusztig foldings.  For more details on Lusztig foldings, see \cite{lusztig1983some}.\\

In Table \ref{DescriptionFiniteFoldingTable}, we give a detailed description of each of the foldings found in Table \ref{FiniteFoldingTable} parts (i) and (ii). If $\widehat W$ and $W$ are Coxeter groups of rank $n$ and $m$, we use $R=\{r_1,\ldots,r_n\}$ and $S=\{s_1,\ldots,s_m\}$ to denote their generating sets. \\

%In Table \ref{DescriptionFiniteFoldingTable}, Table %\ref{DescriptionAffineFoldingTable1}, and Table %\ref{DescriptionAffineFoldingTable2}, we also include diagrams of how the %Coxeter graphs "fold" in relation to the set partition $\tilde S$ of $S$.\\

\begin{table}[h!]
    \centering
    \begin{tblr}{|c|c|c|}
    \hline
    $\phi:\widehat{W}\to W$ & $\widehat{W}$ & $W$ \\
    \hline
    
    \SetCell[r=4]{c} $\phi:B_n\to A_{2n-1}$ & \SetCell[r=12]{c}\begin{tikzpicture}[
      thick,
      acteur/.style={
        circle,
        fill=black,
        thick,
        inner sep=2pt,
        minimum size=0.2cm
      }
    ] 
      
\node (c1) at (7.25,2) [acteur,label=below:$r_{n-1}$]{};
      \node (c2) at (6.5,2)[acteur]{}; 
      \node (c3) at (5.75,2) [acteur,label=below:$r_2$]{}; 
      \node (c4) at (5,2) [acteur,label=below:$r_1$]{}; 
      \node (c5) at (8,2) [acteur,label=below:$r_n$]{}; 
%      \node (a6) at (2.5,0.5) [acteur,label=below:$s_{n+1}$]{};
%      \node (a7) at (1,0.5) [acteur]{}; 
%      \node (a8) at (-0.5,0.5) [acteur,label=below:$s_{2n-2}$]{}; 
%      \node (a9) at (-2,0.5) [acteur,label=below:$s_{2n-1}$]{}; 
      \draw (c1) -- (c2); 
      \draw[dashed] (c2) -- (c3); 
      \draw (c3) -- (c4);
      \draw (c1) -- node[midway, above, sloped] {4} (c5);
    \end{tikzpicture}  & \SetCell[r=4]{c} \begin{tikzpicture}[
      thick,
      acteur/.style={
        circle,
        fill=black,
        thick,
        inner sep=2pt,
        minimum size=0.2cm
      }
    ] 
      \node (a1) at (1,2.5) [acteur,label=$s_{n-1}$]{};
      \node (a2) at (0,2.5)[acteur]{}; 
      \node (a3) at (-1,2.5) [acteur,label=$s_2$]{}; 
      \node (a4) at (-2,2.5) [acteur,label=$s_1$]{}; 
      \node (a5) at (2,2) [acteur,label=below:$s_n$]{}; 
      \node (a6) at (1,1.5) [acteur,label=below:$s_{n+1}$]{};
      \node (a7) at (0,1.5) [acteur]{}; 
      \node (a8) at (-1,1.5) [acteur,label=below:$s_{2n-2}$]{}; 
      \node (a9) at (-2,1.5) [acteur,label=below:$s_{2n-1}$]{}; 
      \draw (a1) -- (a2); 
      \draw[dashed] (a2) -- (a3); 
      \draw (a3) -- (a4);
      \draw (a1) -- (a5);
      \draw (a5) -- (a6);
      
      \draw (a6) -- (a7);
      \draw[dashed] (a7) -- (a8);
      \draw (a8) -- (a9);
      \draw[blue, <->] (a4)--(a9);
      \draw[blue, <->] (a3)--(a8);
      \draw[blue, <->] (a2)--(a7);
      \draw[blue, <->] (a1)--(a6);
%     \draw (a7) -- (a5);
%     \draw[red] (a9) -- (a5);
%     \draw[red] (a9) -- (a4);
%      \draw[red] (a5) -- (a3);
%      \draw[red] (a6) -- (a2);

    \end{tikzpicture} \\
    $\phi(r_{i})=s_is_{2n-i}$ &&\\
    $i\neq n$ &&\\    
    $\phi(r_{n})=s_n$ && \\ \hline
    $\phi:B_n\to A_{2n}$& & \SetCell[r=4]{c} \begin{tikzpicture}[
      thick,
      acteur/.style={
        circle,
        fill=black,
        thick,
        inner sep=2pt,
        minimum size=0.2cm
      }
    ] 
      \node (a1) at (2,2.5) [acteur,label=$s_{n}$]{};
      \node (a2) at (1,2.5)[acteur]{}; 
      \node (a3) at (0,2.5) [acteur,label=$s_2$]{}; 
      \node (a4) at (-1,2.5) [acteur,label=$s_1$]{}; 
      %\node (a5) at (2,2) [acteur,label=below:$s_n$]{}; 
      \node (a6) at (2,1.5) [acteur,label=below:$s_{n+1}$]{};
      \node (a7) at (1,1.5) [acteur]{}; 
      \node (a8) at (0,1.5) [acteur,label=below:$s_{2n-1}$]{}; 
      \node (a9) at (-1,1.5) [acteur,label=below:$s_{2n}$]{}; 
      \draw (a1) -- (a2); 
      \draw[dashed] (a2) -- (a3); 
      \draw (a3) -- (a4);
      \draw (a1)edge[bend left=80] (a6);
      %\draw (a5) -- (a6);
      
      \draw (a6) -- (a7);
      \draw[dashed] (a7) -- (a8);
      \draw (a8) -- (a9);

      \draw[blue, <->] (a4)--(a9);
      \draw[blue, <->] (a3)--(a8);
      \draw[blue, <->] (a2)--(a7);
      \draw[blue, <->] (a1)--(a6);
%     \draw (a7) -- (a5);
%     \draw[red] (a9) -- (a5);
%     \draw[red] (a9) -- (a4);
%      \draw[red] (a5) -- (a3);
%      \draw[red] (a6) -- (a2);

    \end{tikzpicture}\\
    $\phi(r_{i})=s_is_{2n+1-i}$ &&\\
    $i\neq n$ &&\\
    $\phi(r_{n})=s_ns_{n+1}s_n$ &&\\ \hline
    $\phi:B_n\to D_{n+1}$ & & \SetCell[r=4]{c} \begin{tikzpicture}[
      thick,
      acteur/.style={
        circle,
        fill=black,
        thick,
        inner sep=2pt,
        minimum size=0.2cm
      }
    ] 
      \node (a1) at (1,2) [acteur,label=$s_{n-1}$]{};
      \node (a2) at (0,2)[acteur]{};
      
      \node (a3) at (-1,2) [acteur,label=$s_2$]{}; 
      \node (a4) at (-2,2) [acteur,label=$s_1$]{}; 
      \node (a5) at (2,2.5) [acteur,label=above:$s_n$]{}; 
      \node (a6) at (2,1.5) [acteur,label=below:$s_{n+1}$]{};
      
      \draw (a1) -- (a2); 
      \draw[dashed] (a2)--(a3);
      \draw (a3) -- (a4);
      \draw (a1) -- (a5);
      \draw (a1) -- (a6);
      
      \draw[blue, <->] (a5)--(a6);
%     \draw (a7) -- (a5);
%     \draw[red] (a9) -- (a5);
%     \draw[red] (a9) -- (a4);
%      \draw[red] (a5) -- (a3);
%      \draw[red] (a6) -- (a2);

    \end{tikzpicture}\\
    $\phi(r_i)=s_i$ &&\\
    $i\neq n$ &&\\
    $\phi(r_n)=s_ns_{n+1}$ &&\\ \hline
    $\phi:I_2(n+1)\to A_n$ & \SetCell[r=3]{c} \begin{tikzpicture}[
      thick,
      acteur/.style={
        circle,
        fill=black,
        thick,
        inner sep=2pt,
        minimum size=0.2cm
      }
    ]    

        \node (t1) at (0,0)[acteur,label=below:$r_1$]{};
        \node (t2) [right of=t1][acteur,label=below:$r_2$]{};

        \draw (t1)--node[midway, above, sloped] {$n+1$}(t2);

    \end{tikzpicture} & \SetCell[r=3]{c} \begin{tikzpicture}[
      thick,
      acteur/.style={
        circle,
        fill=black,
        thick,
        inner sep=2pt,
        minimum size=0.2cm
      }
    ] 
        \node (a1) at (-2,2.5) [acteur,label=left:$s_1$]{};
        \node (a2) at (-1.25,2.0) [acteur,label=right:$s_2$]{};
        \node (a3) at (-2,1.5) [acteur,label=left:$s_3$]{};
        \node (a4) at (-1.25,1.0) [acteur,label=right:$s_4$]{};
        \node (a5) at (-2,0.5) [acteur,label=left:$s_{n-1}$]{};
        \node (a6) at (-1.25,0.0) [acteur,label=right:$s_n$]{};

        \draw (a1)--(a2)
        (a2)--(a3)
        (a3)--(a4)
        (a5)--(a6);
        \draw[densely dashed](a4)--(a5);
        \draw[blue, <->] (a1)--(a3);\draw[blue,densely dashed, <->] (a3)--(a5);
        \draw[blue, <->] (a2)--(a4);\draw[blue,densely dashed, <->] (a4)--(a6);

        \node (a1) at (1,2.5) [acteur,label=left:$s_1$]{};
        \node (a2) at (1.75,2.0) [acteur,label=right:$s_2$]{};
        \node (a3) at (1,1.5) [acteur,label=left:$s_3$]{};
        \node (a4) at (1.75,1.0) [acteur,label=right:$s_4$]{};
        \node (a5) at (1,0.5) [acteur,label=left:$s_{n-2}$]{};
        \node (a7) at (1.75,0) [acteur,label=right:$s_{n-1}$]{};
        \node (a6) at (1,-0.5) [acteur,label=left:$s_n$]{};

        \draw (a1)--(a2)
        (a2)--(a3)
        (a3)--(a4)
        (a5)--(a7)--(a6);
        \draw[densely dashed](a4)--(a5);
        \draw[blue, <->] (a1)--(a3);\draw[blue,densely dashed, <->] (a3)--(a5);\draw[blue, <->] (a6)--(a5);
        \draw[blue, <->] (a2)--(a4);\draw[blue,densely dashed, <->] (a4)--(a7);
      
    \end{tikzpicture}\\
    $\displaystyle\phi(r_1)= \prod_{\substack{s_i\in S\\i\text{ odd}}}s_{i}$ &&\\
    $\displaystyle\phi(r_2)=\prod_{\substack{s_i\in S\\i\text{ even}}}s_i$ && \\
    \hline
\end{tblr}
\vspace{10pt}
    \caption{Description of the finite folding (i) and (ii) in Table \ref{FiniteFoldingTable}}
    \label{DescriptionFiniteFoldingTable}
\end{table}
For $k\in\Z_{>0}$, define the $q$-integer
$$[k]_q:=1+q+q^2+\cdots+q^{k-1}.$$ 

%We now state our main result for the finite foldings. 

\begin{theorem} %[Main Theorem for finite Coxeter foldings]
Consider the Coxeter groups of type $A,B,D,I_2(n)$ in relation to the foldings given in  Table \ref{DescriptionFiniteFoldingTable}.  The following are true:

%For simply-laced Coxeter groups of type $A$ and $D$, %let $\ell_A$ and $\ell_D$ denote their corresponding %length functions. 
\begin{enumerate}
    \item $U_{B_n}^{A_{2n-1}}(q)=\displaystyle\prod_{k=1}^{2n}\ [k]_{(-1)^kq}$\\
    
    \item $U_{B_n}^{A_{2n}}(q)=\displaystyle\prod_{k=1}^{2n+1}\ [k]_{(-1)^kq}$\\
    
    \item $U_{B_n}^{D_{n+1}}(q)=\displaystyle[2]_q[3]_{-q}[4]_q\prod_{k=3}^{n}\left([2k+1]_q-q^k\right)$\\
    
    \item If $n=2m$, then $\displaystyle U_{I_2(n+1)}^{A_{n}}(q)=[2]_{q^m}[n+1]_{q^m}$\\
    
    \item If $n=2m-1$, then $\displaystyle U_{I_2(n+1)}^{A_{n}}(q)=[2]_{q^{m-1}}[2]_{q^m}[m]_{q^n}$

    %\item $\displaystyle U_{I_2(2n+1)}^{A_{2n}}(q)=[2]_{q^n}[2n+1]_{q^n}$
    %\item $\displaystyle U_{I_2(2n)}^{A_{2n-1}}(q)=[2]_{q^{n-1}}[2]_{q^n}[n]_{q^{2n-1}}$
\end{enumerate}
    \label{thm:MainTheorem}
\end{theorem}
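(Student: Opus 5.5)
For parts (1)--(3) I would argue by induction on $n$, using a parabolic factorization that is compatible with unfolding. Let $J=\{r_1,\dots,r_{n-1}\}$ or $J=\{r_2,\dots,r_n\}$, whichever leaves a smaller $B_{n-1}$ inside $\widehat W=B_n$. Let $J'\subseteq S$ be the union of the blocks $I\in\widehat S$ corresponding to $J$.

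The first step is to show that for $w=w^Jw_J$, with $w^J$ a minimal coset representative of $\widehat W/\widehat W_J$, the element $\phi(w^J)$ is a minimal representative of $W/W_{J'}$. Admissibility should give this: each $\phi(r_I)=r_I$ is the longest element of $W_I$, so $\ell(\phi(w^J)r_I)>\ell(\phi(w^J))$ forces $\ell(\phi(w^J)s)>\ell(\phi(w^J))$ for every $s\in I$. Consequently
$$\ell(\phi(w))=\ell(\phi(w^J))+\ell(\phi(w_J)),$$
and the unfolding series factors as
$$U_{\widehat W}^{W}(q)=U_{\widehat W_J}^{W_{J'}}(q)\cdot\sum_{v\in \widehat W^J}q^{\ell(\phi(v))}.$$
Here $W_{J'}$ is the ambient group of the same family at the previous rank: $A_{2n-3}$, $A_{2n-2}$, or $D_n$ respectively. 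By induction it then suffices to compute the coset sum and match it with the ratio of consecutive products. For (1) that ratio is $[2n-1]_{-q}[2n]_q$, for (2) it is $[2n]_q[2n+1]_{-q}$, and for (3) it is $[2n+1]_q-q^n$. The base cases are small: for instance, for (1) with $n=1$ we have $1+q=[1]_{-q}[2]_q$. For (3) the base is $n=2$, where $B_2\subset D_3=A_3$ and one checks $[2]_q[3]_{-q}[4]_q$ directly.

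The second step computes the coset sums concretely using the permutation models. In (1), $B_n$ is the centralizer in $S_{2n}$ of $i\mapsto 2n+1-i$, and $\ell$ is the inversion count. In (2) the model is the same inside $S_{2n+1}$, with the middle point fixed. In (3), $B_n$ sits in $D_{n+1}$ as the even signed permutations of $\{1,\dots,n+1\}$, with the sign of the last coordinate determined by parity. Here I would use the type $D$ inversion formula, counting pairs $i<j$ with $w(i)>w(j)$ or $w(i)+w(j)<0$.

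There are $2n$ coset representatives, indexed by where the relevant extreme entry goes: $\pm j$ for $j=1,\dots,n$, or equivalently a symmetric pair of positions. For each representative I would count ambient inversions explicitly. In type $A$ these should come in paired lengths $a$ and $a+\delta$ with alternating parity, and the alternating signs are exactly what produce $[k]_{-q}$ factors. For instance, in (1) the resulting list of exponents must sum to $[2n-1]_{-q}[2n]_q$.

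I expect this explicit inversion bookkeeping, especially for (3), to be the main obstacle. The reason is that the target $[2n+1]_q-q^n$ records a missing length $n$, which has to be explained by the even-sign constraint in $D_{n+1}$ pushing one representative's length from $n$ to $n+1$. I would first tabulate small $n$ to pin down the exact length of each representative before writing the general count.

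For (4) and (5), $\widehat W=I_2(n+1)$ has only $2(n+1)$ elements, namely the alternating words in $r_1,r_2$, so I would compute each unfolded length directly. $\phi(r_1)$ and $\phi(r_2)$ are products of commuting simple reflections, and $c=\phi(r_1)\phi(r_2)$ is a Coxeter element of $S_{n+1}$, an $(n+1)$-cycle; every element of $\widehat W$ is then either $c^k$ or $c^k\phi(r_1)$. Because the reflections within each of $\phi(r_1)$ and $\phi(r_2)$ commute, the ambient length grows additively along alternating words until the midpoint. Using this, or directly counting inversions of these bipartite-Coxeter-type permutations, the length of an alternating word of length $j$ should be $j\lceil n/2\rceil$ or $j\lfloor n/2\rfloor$ according to the parity pattern, decreasing symmetrically past the longest element $w_0$ of length $n(n+1)/2$.

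Summing gives $[2]_{q^m}[n+1]_{q^m}$ when $n=2m$, since both generators then have length $m$. When $n=2m-1$, the generators have lengths $m$ and $m-1$, and the alternating sums regroup as $[2]_{q^{m-1}}[2]_{q^m}[m]_{q^{n}}$. I would verify this regrouping by pairing each word with its extension by one generator.
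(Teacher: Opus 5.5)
Your outline for (1)--(3) matches the paper's: $\widehat J=\{r_2,\dots,r_n\}$, the fact that $\phi$ carries $\widehat W^{\widehat J}$ into $W^J$, the resulting factorization, and induction against the ratios $[2n-1]_{-q}[2n]_q$, $[2n]_q[2n+1]_{-q}$, $[2n+1]_q-q^n$. Your alternative $\{r_1,\dots,r_{n-1}\}$ generates a subgroup of type $A_{n-1}$, so it is not an option. The gap is that the induction step is never carried out. You defer the coset sums to an inversion count and call it the main obstacle.

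The missing idea is M\"uhlherr's additivity, Lemma~\ref{lem:folding_length}(1): for a reduced $R$-word, $\ell(\phi(r_{i_1}\cdots r_{i_k}))=\sum_j\ell(\phi(r_{i_j}))$. You already need this, or part (2) of that lemma, to justify the inequality $\ell(\phi(w^J)r_I)>\ell(\phi(w^J))$ in your first step. The $2n$ representatives have reduced words $r_ir_{i-1}\cdots r_1$ ($0\le i\le n$) and $r_ir_{i+1}\cdots r_n\cdots r_1$ ($1\le i<n$). For $i<n$ the pair $(\ell(\phi(r_i)),\ell(\phi(r_n)))$ is $(2,1)$, $(2,3)$, $(1,2)$ in cases (1), (2), (3). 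Summing along the words gives the exponent sets $\{0,2,\dots,2n-2\}\cup\{2n-1,2n+1,\dots,4n-3\}$, $\{0,2,\dots,2n-2\}\cup\{2n+1,2n+3,\dots,4n-1\}$ and $\{0,1,\dots,2n\}\setminus\{n\}$. These give $[n]_{q^2}(1+q^{2n-1})$, $[n]_{q^2}(1+q^{2n+1})$ and $[2n+1]_q-q^n$, which are exactly the required ratios.

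Your permutation route does work in type $A$: in (1) the representative with $1\mapsto k$ has $2k-2$ inversions for $k\le n$ and $2k-3$ for $k>n$. In (3), however, the model you describe does not fit the length formula you quote. The length is $\mathrm{inv}(w)+\#\{i<j: w(i)+w(j)<0\}$, with a pair meeting both conditions counted twice. For that formula the fork of $D_{n+1}$ sits at positions $1,2$, so $\phi(B_n)=\{w: w(1)=\pm 1\}$ and it is the \emph{first} coordinate whose sign is forced. With the last coordinate forced, any sign change of that coordinate together with one other has length at least $2n$, not $2$.

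For (4)--(5), enumerating the alternating words directly is fine. The paper instead uses $\widehat J=\{r_2\}$, whose quotient has exactly one element of each length $0,\dots,n$. But your length formula is wrong for odd $n$. If $n=2m-1$, an alternating word of length $2k$ unfolds to length $kn$, and one of length $2k+1$ unfolds to $kn+m$ or $kn+m-1$, not $j\lceil n/2\rceil$ or $j\lfloor n/2\rfloor$. For example, with $n=3$, $\phi(r_1r_2)=s_1s_3s_2$ has length $3$.

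Nor does anything decrease ``past $w_0$''. Every alternating word of length at most $n+1$ is reduced, and the two of length $n+1$ both equal the longest element. Additivity along these words is again Lemma~\ref{lem:folding_length}(1); it does not follow merely from the letters of $\phi(r_1)$ (resp.\ $\phi(r_2)$) commuting. With the corrected lengths the total is $1+q^{mn}+2\sum_{k=1}^{m-1}q^{kn}+(q^{m-1}+q^m)[m]_{q^n}$. This equals $[2]_{q^{m-1}}[2]_{q^m}[m]_{q^n}$, because $(1+q^{m-1})(1+q^m)=(1+q^n)+(q^{m-1}+q^m)$ and $(1+q^n)[m]_{q^n}=1+q^{mn}+2\sum_{k=1}^{m-1}q^{kn}$.
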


%=[2]_q[3]_{-q}[4]_q\cdots [2n-1]_{-q}[2n]_q
%=[2]_q[3]_{-q}[4]_q\cdots [2n-1]_{-q}[2n]_q[2n+1]_{-q}

\begin{example}
    \label{ex:ClassicalFoldingB2}
    Consider $B_2$ with generators $R=\{r_1,r_2\}$ as the folding subgroup of $A_3$ with generators $S=\lbrace s_1,s_2,s_3\rbrace$ under the embedding $$\phi(r_1)=s_1s_3,\quad \phi(r_2)=s_2.$$ 
    
    %or as the folding subgroup of $A_4$ under the embedding %$$\phi(r_1)=s_1s_4,\quad \phi(r_2)=s_2s_3s_2.$$   
    %In Table \ref{TableA34B2}, we list the unfolding of each element in $B_2$ as elements in $A_3$ and $A_4$, and their respective length in each case. 
    
    In Figure \ref{BruhatOrderA3}, we show the distribution of the unfolding of the elements in $B_2$ in the Bruhat order of $A_3$.
By Theorem \ref{thm:MainTheorem} we have
   \begin{equation*}
       U_{B_2}^{A_3}(q)=1+q+q^2+2q^3+q^4+q^5+q^6=[2]_q[3]_{-q}[4]_q
       \label{eq:UnfoldingPolynomialB2A3Example}
   \end{equation*}
   % and
   % \begin{equation*}
   %     U_{B_2}^{A_4}(q)=1+q^2+q^3+2q^5+q^7+q^8+q^{10}=[2]_q[3]_{-q}[4]_q[5]_{-q}
   %     \label{eq:UnfoldingPolynomialB2A4Example}
   % \end{equation*}
%Note that these formulas can be seen directly from Table \ref{TableA34B2}.
%    \label{ex:UnfoldinPolynomialB2Example}    
\end{example}

\begin{comment}
\begin{center}
    \begin{table}[h!]
        \centering
        \begin{tabular}{|cc|cc|cc|} \hline
          Word $B_2$ & Length $B_2$ & Word $A_3$ & Length $A_3$  & Word $A_4$ & Length $A_4$\\ \hline
           $e$ & $0$ & $e$ & $0$ & $e$ & $0$  \\
           $r_1$ & $1$ & $s_1s_3$ & $2$ & $s_1s_4$ & $2$ \\
           $r_2$ & $1$ & $s_2$ & $1$ & $s_2s_3s_2$ & $3$ \\        
           $r_2r_1$ & $2$ & $s_2s_1s_3$ & $3$ & $s_2s_3s_2s_1s_4$ &  $5$ \\
           $r_1r_2$ & $2$ & $s_1s_3s_2$ & $3$ & $s_1s_4s_2s_3s_2$ & $5$\\
           $r_1r_2r_1$ & $3$ & $s_1s_3s_2s_1s_3$ & $5$ & $s_1s_4s_2s_3s_2s_1s_4$ & $7$ \\
           $r_2r_1r_2$ & $3$ & $s_2s_1s_3s_2$ & $4$ & $s_2s_3s_2s_1s_4s_2s_3s_2$ & $8$ \\          
           $r_1r_2r_1r_2$ & $4$ & $s_1s_3s_2s_1s_3s_2$ & $6$ & $s_1s_4s_2s_3s_2s_1s_4s_2s_3s_2$ & $10$\\ \hline
        \end{tabular}
        \caption{Elements in $B_2$ and their respective unfolding in $A_3$ and $A_4$}
        \label{TableA34B2}
    \end{table}
\end{center}
\end{comment}

\begin{figure}[h!]
    \centering 
    \begin{tikzpicture}[auto,node distance=1.9cm]
      \node (A) {\underline{\textcolor{red}{$s_1s_3s_2s_1s_3s_2$}}};
      
      \node (B) [below of=A] {$s_1s_2s_3s_1s_2$};
\node (C) [right of=B] {$s_2s_3s_2s_1s_2$};
\node (D) [left of=B] {\underline{\textcolor{red}{$s_1s_3s_2s_1s_3$}}};

\node (E) [below of=B] {$s_2s_3s_2s_1$};
\node (F) [right of=E] {\underline{\textcolor{red}{$s_2s_1s_3s_2$}}};
\node (G) [right of=F] {$s_3s_1s_2s_1$};
\node (H) [left of=E] {$s_1s_2s_3s_1$};
\node (I) [left of=H] {$s_1s_2s_3s_2$};

\node (J) [below right of=E] {$s_3s_2s_1$};
\node (K) [right of=J] {\underline{\textcolor{red}{$s_1s_3s_2$}}};
\node (L) [right of=K] {$s_1s_2s_1$};
\node (M) [below left of=E] {\underline{\textcolor{red}{$s_2s_1s_3$}}};
\node (N) [left of=M] {$s_2s_3s_2$};
\node (O) [left of=N] {$s_1s_2s_3$};

\node (P) [below left of=J] {\underline{\textcolor{red}{$s_1s_3$}}};
\node (Q) [right of=P] {$s_2s_1$};
\node (R) [right of=Q] {$s_1s_2$};
\node (S) [left of=P] {$s_3s_2$};
\node (T) [left of=S] {$s_2s_3$};

\node (U) [below of=P] {\underline{\textcolor{red}{$s_2$}}};
\node (V) [right of=U] {$s_1$};
\node (W) [left of=U] {$s_3$};

\node (X) [below of=U] {\underline{\textcolor{red}{$e$}}};

\draw (A) -- (B)
(A) -- (C)
(A) -- (D)
(C) -- (E)
(C) -- (F)
(C) -- (G)
(B) -- (F)
(B) -- (H)
(B) -- (I)
(D) -- (E)
(D) -- (G)
(D) -- (H)
(D) -- (I)
(G) -- (J)
(G) -- (K)--(F)
(G) -- (L)
(F) -- (L)
(F) -- (M)
(F) -- (N)
(E) -- (J)
(E) -- (M)
(E) -- (N)
(H) -- (L)
(H) -- (M)
(H) -- (O)
(I) -- (K)
(I) -- (N)
(I) -- (O)
(L) -- (Q)
(L) -- (R)
(K) -- (P)
(K) -- (R)
(K) -- (S)
(J) -- (P)
(J) -- (Q)
(J) -- (S)
(M) -- (P)
(M) -- (Q)
(M) -- (T)
(N) -- (S)
(N) -- (T)
(O) -- (P)
(O) -- (R)
(O) -- (T)
(V) -- (P)
(V) -- (Q)
(V) -- (R)
(U) -- (Q)
(U) -- (R)
(U) -- (S)
(U) -- (T)
(W) -- (P)
(W) -- (S)
(W) -- (T)
(X) -- (U)
(X) -- (V)
(X) -- (W);

    \end{tikzpicture} 
    \caption{Bruhat order in $A_3$ with the elements in $B_2$ shown in red.}
    \label{BruhatOrderA3}
  \end{figure}

As a consequence of Theorem \ref{thm:MainTheorem}, we establish the following identity that relates the elements in $A_{n}$ with the elements in the folding subgroup $B_{\lfloor(n+1)/2\rfloor}$ and their respective lengths.\\

\begin{coro}\label{cor:length_identity}
Consider $B_{\lfloor(n+1)/2\rfloor}$ as a folding subgroup of $A_n$ as in either Theorem \ref{thm:MainTheorem} part (1) or (2).  Let $\ell_A$ and $\ell_B$ denote the length functions of type $A$ and $B$.  Then
\begin{eqnarray*}
    \sum_{(v,w)\in B_{\lfloor(n+1)/2\rfloor}\times B_{\lfloor(n+1)/2\rfloor}}(-1)^{\ell_B(w)}q^{\ell_B(w)+\ell_A(\phi(v))} & = & \sum_{(x,y)\in A_{n}\times B_{\lfloor(n+1)/2\rfloor}}(-1)^{\ell_A(x)}q^{\ell_A(x)+\ell_B(y)}\\
\end{eqnarray*}    
\end{coro}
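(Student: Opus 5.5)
The identity factors on both sides: each side is a product of two generating functions, so it reduces to a polynomial identity that follows from Theorem \ref{thm:MainTheorem} parts (1) and (2) together with the classical product formulas for Poincaré polynomials. Write $m=\lfloor (n+1)/2\rfloor$. Since the summation index set is a Cartesian product and the summand factors, the left side equals
\[
\Big(\sum_{w\in B_m}(-q)^{\ell_B(w)}\Big)\cdot U^{A_n}_{B_m}(q),
\]
and the right side equals
\[
\Big(\sum_{x\in A_n}(-q)^{\ell_A(x)}\Big)\cdot \Big(\sum_{y\in B_m} q^{\ell_B(y)}\Big).
\]
So it suffices to show $P_{B_m}(-q)\,U^{A_n}_{B_m}(q)=P_{A_n}(-q)\,P_{B_m}(q)$, where $P_W(q)=\sum_{w\in W}q^{\ell(w)}$.

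Next I would use the standard factorizations of Poincaré polynomials via exponents (see \cite{bjorner2005combinatorics}):
\[
P_{A_n}(q)=\prod_{k=1}^{n+1}[k]_q, \qquad P_{B_m}(q)=\prod_{j=1}^{m}[2j]_q .
\]
Note that $n+1$ equals $2m$ in case (1) and $2m+1$ in case (2). In both cases, Theorem \ref{thm:MainTheorem} gives $U^{A_n}_{B_m}(q)=\prod_{k=1}^{n+1}[k]_{(-1)^kq}$.

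Split this product by parity of $k$. The even factors give $\prod_{j=1}^m [2j]_q=P_{B_m}(q)$. The odd factors give $\prod_{k\text{ odd}}[k]_{-q}$.

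Meanwhile $P_{B_m}(-q)=\prod_{j=1}^m[2j]_{-q}$ is precisely the even-$k$ part of $\prod_{k=1}^{n+1}[k]_{-q}=P_{A_n}(-q)$. Hence the left side becomes
\[
P_{B_m}(q)\cdot\prod_{k\text{ even}}[k]_{-q}\cdot\prod_{k\text{ odd}}[k]_{-q}=P_{B_m}(q)\,P_{A_n}(-q),
\]
which is the right side.

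There is no real obstacle here. The only care needed is bookkeeping: checking that the index range $k\le n+1$ contains exactly the even integers $2,4,\dots,2m$ in both parity cases of $n$, and that the $B$ length function on the left is evaluated at the same $w$ in both the sign and the exponent, so that it yields $P_{B_m}(-q)$.
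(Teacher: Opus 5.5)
Your proposal is correct and is essentially the paper's argument: both sides factor as $B_m(-q)\,U^{A_n}_{B_m}(q)$ and $A_n(-q)\,B_m(q)$, and the identity then follows from Theorem~\ref{thm:MainTheorem} parts (1)--(2) and the product formulas for the Poincar\'e polynomials, with your parity split spelling out the check the paper leaves implicit. You also correctly take the type $A_n$ Poincar\'e polynomial to be $\prod_{k=1}^{n+1}[k]_q$. The paper's display $[1]_q\cdots[n]_q$ is an off-by-one slip, and the identities it states hold only with the upper index $n+1$.
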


\begin{example}
    Let $n=3$, so we have the embedding $\phi: B_2\to A_3$. Then

    \begin{eqnarray*}
    \sum_{(v,w)\in B_2\times B_2}(-1)^{\ell_B(w)}q^{\ell_B(w)+\ell_A(\phi(v))} & = & \sum_{(x,y)\in A_{3}\times B_2}(-1)^{\ell_A(x)}q^{\ell_A(x)+\ell_B(y)}\\
    & = & 1-q+q^2-2q^4+2q^5-2q^6+q^8-q^9+q^{10}
\end{eqnarray*}

Note that the summation sets $|B_2\times B_2|=64$ and $|A_3\times B_2|=192$.
\end{example}

%but the summation accounts only for $12$ of the %elements in those products, with no cancellation that %can be attributed to the nature of the pairs making %possible such an astonishing reduction.

%\begin{example}
%    Let $n=4$, so we have the embedding $\phi: B_2\to %A_4$. Then

%    \begin{eqnarray*}
%    \sum_{(v,w)\in B_2\times B_2}%(-1)^{\ell_B(w)}q^{\ell_B(w)+\ell_A(\phi(v))} & = & %\sum_{(x,y)\in A_{4}\times B_2}%(-1)^{\ell_A(x)}q^{\ell_A(x)+\ell_B(y)}\\
%    & = & 1-2q+3q^2-3q^3+q^4+2q^5-5q^6+6q^7\\
%    & & %-5q^8+2q^9+q^{10}-3q^{11}+3q^{12}-2q^{13}+q^{14}
%\end{eqnarray*}
%Note that $|B_2\times B_2|=64$ and $|A_4\times %B_2|=960$, but the summation accounts only for $40$ of %the elements in those products, with no cancellation %that can be attributed to the nature of the pairs %making possible such an astonishing reduction.
%\end{example}

\subsection{Results for foldings of Coxeter groups of affine type}
We consider folding subgroups of irreducible affine Coxeter groups.  Affine Coxeter groups are derived by adding a distinguished generator to a finite Coxeter group of the corresponding type and satisfy the property that every proper parabolic subgroup is a finite Coxeter group.  Affine Coxeter groups correspond to Weyl groups of affine Lie algebras and play an important role in their representation theory.  As with the finite case, irreducible affine folding pairs were classified by Crisp in \cite{crisp1996injective}.  We list them in Table \ref{AffineFoldingTable} and use the notational conventions on the classification of affine Coxeter groups found in \cite[Appendix A1]{bjorner2005combinatorics}. 

\begin{table}[H]
    \centering
    \begin{tabular}{ccc}    
         & $\qquad\qquad\widehat{W}\qquad\qquad$ & $\qquad W\qquad$ \\ \hline
         & & \vspace{-10pt} \\
         (i) & $\widetilde{A}_{n-1}$ & $\widetilde{A}_{mn-1}$, $m\geq 2$\\
        (ii) & $\widetilde{B}_n$ & $\widetilde{D}_{n+1}$, $\widetilde{D}_{2n}$, $\widetilde{D}_{2n+1}$\\
        (iii) & $\widetilde{C}_n$ & $\widetilde{A}_{2n-1}$, $\widetilde{A}_{2n}$, $\widetilde{A}_{2n+1}$, $\widetilde{B}_{n+1}$, $\widetilde{D}_{n+2}$, $\widetilde{C}_{2n}$, $\widetilde{C}_{2n+1}$\\
        %(iv) & $\tilde{C}_n$ & $\tilde{A}_{2n-1}$, $\tilde{A}_{2n}$, $\tilde{A}_{2n+1}$, $\tilde{D}_{n+2}$\\
        (iv) & $\widetilde{C}_2$ & $\widetilde{D}_5$\\
        (v) & $\widetilde{F}_4$ & $\widetilde{E}_6$, $\widetilde{E}_7$\\
        (vi) & $\widetilde{G}_2$ & $\widetilde{B}_3$, $\widetilde{F}_4$, $\widetilde{D}_4$, $\widetilde{D}_6$, $\widetilde{E}_6$, $\widetilde{E}_7$        
    \end{tabular}
    \caption{Coxeter embeddings of affine Coxeter groups}
    \label{AffineFoldingTable}
\end{table}

We focus on the infinite families of foldings found in parts (i)-(iii) of Table \ref{AffineFoldingTable} (note that parts (iv)-(vi) are not infinite families).  The Coxeter groups involved in these families are listed in Table \ref{tab:affinegroupstable}.  We separate these foldings pairs by the associated folding subgroup. \\

\begin{table}[H] 
   \centering 
   \begin{tabular}{
>{\centering\arraybackslash}m{0.03\linewidth}
>{\centering\arraybackslash}m{0.35\linewidth}
>{\centering\arraybackslash}m{0.03\linewidth}
>{\centering\arraybackslash}m{0.28\linewidth}
}    
   $\widetilde A_n:$  \vspace{-15pt} & \centertikz{\begin{tikzpicture}[ 
     thick, 
     acteur/.style={ 
       circle, 
       fill=black, 
       thick, 
       inner sep=2pt, 
       minimum size=0.2cm 
     } ,
      baseline={(0,-0.75)}
   ]  
     \node (a1) at (2,2.5) [acteur,label=$s_{k}$]{}; 
     \node (a2) at (1,2.5)[acteur]{};  
     \node (a3) at (0,2.5) [acteur,label=$s_1$]{};  
     \node (a4) at (-1,2.5) [acteur,label=$s_0$]{};  
     %\node (a5) at (2,2) [acteur,label=below:$s_n$]{};  
     \node (a6) at (2,1.5) [acteur,label=below:$s_{k+1}$]{}; 
     \node (a7) at (1,1.5) [acteur]{};  
     \node (a8) at (0,1.5) [acteur,label=below:$s_{n-1}$]{};  
     \node (a9) at (-1,1.5) [acteur,label=below:$s_{n}$]{};  
     \draw (a1) -- (a2);  
     \draw[dashed] (a2) -- (a3);  
     \draw (a3) -- (a4); 
     \draw (a1)edge[bend left=80] (a6); 
     \draw (a4)edge[bend right=80] (a9); 
     %\draw (a5) -- (a6); 
      
     \draw (a6) -- (a7); 
     \draw[dashed] (a7) -- (a8); 
     \draw (a8) -- (a9); 
 
     %\draw[blue, <->] (a4)--(a9); 
     %\draw[blue, <->] (a3)--(a8); 
     %\draw[blue, <->] (a2)--(a7); 
     %\draw[blue, <->] (a1)--(a6); 
%     \draw (a7) -- (a5); 
%     \draw[red] (a9) -- (a5); 
%     \draw[red] (a9) -- (a4); 
%      \draw[red] (a5) -- (a3); 
%      \draw[red] (a6) -- (a2); 
 
   \end{tikzpicture}} \vspace{-30pt}
   &
   $\widetilde B_n:$  \vspace{-15pt}& \centertikz{\begin{tikzpicture}[ 
     thick, 
     acteur/.style={ 
       circle, 
       fill=black, 
       thick, 
       inner sep=2pt, 
       minimum size=0.2cm 
     } ,
      baseline={(0,-0.75)}
   ]  
     \node (a1) at (2,2) [acteur,label=below:$s_{n-2}$]{}; 
     \node (a2) at (1,2)[acteur]{};  
     \node (a3) at (-1,2) [acteur,label=below:$s_1$]{};  
     \node (a4) at (-2,2) [acteur,label=below:$s_0$]{};  
     \node (a5) at (3,2.5) [acteur,label=above:$s_{n-1}$]{};  
     \node (a6) at (3,1.5) [acteur,label=below:$s_{n}$]{}; 
     \node (a7) at (0,2) [acteur,label=below:$s_2$]{}; 
     \draw (a1) -- (a2);  
     \draw[dashed] (a7) -- (a2);  
     \draw (a7)--(a3) -- node[midway, above, sloped] {4} (a4); 
     \draw (a1) -- (a5); 
     \draw (a1) -- (a6);

     %\draw[blue, <->] (a5)--(a6); 
%     \draw (a7) -- (a5); 
%     \draw[red] (a9) -- (a5); 
%     \draw[red] (a9) -- (a4); 
%      \draw[red] (a5) -- (a3); 
%      \draw[red] (a6) -- (a2); 
 
   \end{tikzpicture}}  \vspace{-30pt}\\ 
$\widetilde C_n:$ & \centertikz{\begin{tikzpicture}[ 
     thick, 
     acteur/.style={ 
       circle, 
       fill=black, 
       thick, 
       inner sep=2pt, 
       minimum size=0.2cm 
     } 
   ]  
      
\node (c1) at (9,2) [acteur,label=below:$s_{n-1}$]{}; 
     \node (c2) at (8,2)[acteur]{};  
     \node (c3) at (6,2) [acteur,label=below:$s_1$]{};  
     \node (c4) at (5,2) [acteur,label=below:$s_0$]{};  
     \node (c5) at (10,2) [acteur,label=below:$s_n$]{};  
   \node (c6) at (7,2) [acteur,label=below:$s_{2}$]{}; 
   %\node (c7) at (8,2) [acteur]{};  
%      \node (a8) at (-0.5,0.5) [acteur,label=below:$s_{2n-2}$]{};  
%      \node (a9) at (-2,0.5) [acteur,label=below:$s_{2n-1}$]{};  
     \draw (c1) -- (c2);  
     \draw[dashed] (c6) -- (c2);  
     \draw (c6)--(c3) --node[midway, above, sloped] {4} (c4); 
     \draw (c1) -- node[midway, above, sloped] {4} (c5); 
   \end{tikzpicture}}
   &
   $\widetilde D_n:$ & \centertikz{\begin{tikzpicture}[ 
     thick, 
     acteur/.style={ 
       circle, 
       fill=black, 
       thick, 
       inner sep=2pt, 
       minimum size=0.2cm 
     } ,
      baseline={(0,-0.75)}
   ]  
     \node (a1) at (1,2) [acteur,label=$s_{n-2}$]{}; 
     \node (a2) at (0,2)[acteur]{};  
     \node (a3) at (-1,2) [acteur,label=$s_3$]{};  
     \node (a4) at (-2,2) [acteur,label=$s_2$]{};  
     \node (a5) at (2,2.5) [acteur,label=above:$s_{n-1}$]{};  
     \node (a6) at (2,1.5) [acteur,label=below:$s_{n}$]{}; 
     \node (a7) at (-3,2.5) [acteur,label=above:$s_{0}$]{};  
     \node (a8) at (-3,1.5) [acteur,label=below:$s_{1}$]{}; 
      
     \draw (a1) -- (a2);  
     \draw[dashed] (a2) -- (a3);  
     \draw (a3) -- (a4); 
     \draw (a1) -- (a5); 
     \draw (a1) -- (a6); 
     \draw (a7)--(a4)--(a8); 
      
     %\draw[blue, <->] (a5)--(a6); 
     %\draw (a7)--(a8); 
%     \draw (a7) -- (a5); 
%     \draw[red] (a9) -- (a5); 
%     \draw[red] (a9) -- (a4); 
%      \draw[red] (a5) -- (a3); 
%      \draw[red] (a6) -- (a2); 
 
   \end{tikzpicture}}
   \end{tabular} 
   \vspace{-45pt} % adjust this value as needed
   \caption{Coxeter graphs of affine Coxeter groups} 
   \label{tab:affinegroupstable} 
\end{table} 

%Similarly, Table \ref{DescriptionAffineFoldingTable1} and Table %\ref{DescriptionAffineFoldingTable2} give a detailed description of the foldings %(i) through (iv) in Table \ref{AffineFoldingTable}. 

%As a second main result, we calculate the unfolding series in terms of $q$-%integers for the eleven infinite families given in parts (i) through (iv) in %Table \ref{AffineFoldingTable} (note that parts (v)-(vii) are not infinite %families).\\

First, we consider the folding subgroup $\widetilde A_{n-1}$ in  $\widetilde A_{mn-1}$ given in part (i) of Table  \ref{AffineFoldingTable}.   This folding is explicitly described in Table \ref{tab:DescriptionAffineFoldingTableTypeA}.\\

\begin{table}[h]
    \centering
     \begin{tabular}{|c|c|c|}
\hline
\rule{0pt}{3ex}

  $\phi:\widehat{W}\to W$ & $\widehat{W}$ & $W$ \tabularnewline
\hline 
 & \multirow{7}{*}{
 \begin{tikzpicture}[
      thick,
      acteur/.style={
        circle,
        fill=black,
        thick,
        inner sep=2pt,
        minimum size=0.2cm
      }
    ]       
\node (c1) at (8,2) [acteur,label=below:$r_{n-2}$]{};
      \node (c2) at (7,2)[acteur]{}; 
      \node (c3) at (6,2) [acteur,label=below:$r_1$]{}; 
      \node (c4) at (5,2) [acteur,label=below:$r_0$]{}; 
      \node (c5) at (9,2) [acteur,label=below:$r_{n-1}$]{}; 
%      \node (a6) at (2.5,0.5) [acteur,label=below:$s_{n+1}$]{};
%      \node (a7) at (1,0.5) [acteur]{}; 
%      \node (a8) at (-0.5,0.5) [acteur,label=below:$s_{2n-2}$]{}; 
%      \node (a9) at (-2,0.5) [acteur,label=below:$s_{2n-1}$]{}; 
      \draw (c1) -- (c2); 
      \draw[dashed] (c2) -- (c3); 
      \draw (c3) -- (c4);
      \draw (c1) -- (c5);
      \draw (c4)edge[bend left=45](c5);
    \end{tikzpicture}} & \multirow{7}{*}{\begin{tikzpicture}[
      thick,
      acteur/.style={
        circle,
        fill=black,
        thick,
        inner sep=2pt,
        minimum size=0.2cm
      }
    ] 
      \node (a1) at (1,2.5) [acteur,label=$s_{2n-1}$]{};
      \node (a2) at (0,2.5)[acteur]{}; 
      \node (a3) at (-1,2.5) [acteur,label=$s_{n+1}$]{}; 
      \node (a4) at (-2,2.5) [acteur,label=$s_{n}$]{}; 
       
      \node (a6) at (1,1.5) [acteur,label=below:$s_{n-1}$]{};
      \node (a7) at (0,1.5) [acteur]{}; 
      \node (a8) at (-1,1.5) [acteur,label=below:$s_{1}$]{}; 
      \node (a9) at (-2,1.5) [acteur,label=below:$s_{0}$]{}; 

      \node (a10) at (-2,3.5) [acteur,label=above:$s_{(m-1)n}$]{};
      \node (a11) at (-1,3.5) [acteur]{};
      \node (a12) at (0,3.5) [acteur]{};
      \node (a13) at (1,3.5) [acteur,label=above:$s_{mn-1}$]{};

      \draw (a9)--(a8);
      \draw[dashed] (a8)--(a7);
      \draw (a7)--(a6)--(a4)--(a3);
      \draw[dashed] (a3)--(a2);
      \draw (a2)--(a1);
      \draw[dashed] (a1)--(a10);
      \draw (a10)--(a11);
      \draw[dashed] (a11)--(a12);
      \draw (a12)--(a13)--(a9);      
     
%     \draw (a7) -- (a5);
%     \draw[red] (a9) -- (a5);
%     \draw[red] (a9) -- (a4);
%      \draw[red] (a5) -- (a3);
%      \draw[red] (a6) -- (a2);

    \end{tikzpicture}}\tabularnewline
   &  & \tabularnewline
   $\phi:\widetilde A_{n-1}\to\widetilde A_{mn-1}$ &  & \tabularnewline
   $\phi(r_{i})=\displaystyle\prod_{j=0}^{m-1}s_{i+jn}$ &  & \tabularnewline
   &  & \tabularnewline
   &  & \tabularnewline
%& &  & \tabularnewline
\hline 
\end{tabular}
\vspace{10pt}
    \caption{Affine folding in Table \ref{AffineFoldingTable} part (i).}    \label{tab:DescriptionAffineFoldingTableTypeA}
\end{table}

\begin{theorem}\label{thm:affine_type_A}
    Let $\widetilde A_{n-1}$ be the folding subgroup of $\widetilde A_{mn-1}$ given by Table \ref{tab:DescriptionAffineFoldingTableTypeA}.  Then the corresponding unfolding series is
    $$U_{\widetilde A_{n-1}}^{\widetilde A_{mn-1}}(q)=\displaystyle\prod_{k=1}^n\frac{[k]_{q^m}}{1-q^{(k-1)m}}.$$
    \label{thm:UnfoldingSeriesTypeAffineA}
\end{theorem}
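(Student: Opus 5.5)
The plan is to show that the unfolding is simply a rescaling of length, $\ell(\phi(w))=m\,\widehat\ell(w)$ for every $w\in\widetilde A_{n-1}$, and then to invoke Bott's formula for the Poincaré series of an affine Weyl group.

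\emph{Step 1 (model the groups).} Put $N=mn$. I will use the standard window model. Realize $\widetilde A_{N-1}$ as the group of bijections $u:\Z\to\Z$ satisfying $u(i+N)=u(i)+N$ and $\sum_{i=1}^N u(i)=\binom{N+1}{2}$. The generator $s_k$ swaps $i$ and $i+1$ for every $i\equiv k \pmod N$. In this model the length is the inversion count
$$\ell(u)=\#\{(i,j)\ :\ 1\le i\le N,\ i<j,\ u(i)>u(j)\},$$
that is, the number of $N$-translation classes of inversion pairs.

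\emph{Step 2 (identify the image of $\phi$).} By Table \ref{tab:DescriptionAffineFoldingTableTypeA}, each $\phi(r_i)=\prod_{j=0}^{m-1}s_{i+jn}$ swaps $a$ and $a+1$ for all $a\equiv i \pmod n$. So $\phi(r_i)$ is exactly the $i$-th simple reflection of $\widetilde A_{n-1}$, viewed as a bijection of $\Z$ with period $n$.

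Hence $\phi(\widetilde A_{n-1})$ consists of affine permutations $u$ with $u(i+n)=u(i)+n$. The normalization condition is inherited: summing the window condition over $m$ consecutive blocks of length $n$ gives $\sum_{i=1}^N u(i)=\binom{N+1}{2}$. Conversely, every generator preserves this description, so the image is precisely the group of period-$n$ affine permutations. I will not need this converse, only the containment.

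\emph{Step 3 (compare lengths).} Let $w\in\widetilde A_{n-1}$ and $u=\phi(w)$. An inversion $(i,j)$ of $u$, with $i<j$ and $u(i)>u(j)$, is also an inversion of $w$ as a period-$n$ bijection, and conversely.

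The $n$-translation class of such a pair consists of the pairs $(i+tn,j+tn)$ for $t\in\Z$. This class is a union of exactly $m$ distinct $N$-translation classes, indexed by $t \bmod m$. Counting classes on both sides gives $\ell(\phi(w))=m\,\widehat\ell(w)$.

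\emph{Step 4 (conclude).} It follows that
$$U^{\widetilde A_{mn-1}}_{\widetilde A_{n-1}}(q)=\sum_{w\in\widetilde A_{n-1}}q^{m\widehat\ell(w)}=P_{\widetilde A_{n-1}}(q^m),$$
where $P$ denotes the ordinary Poincaré series.

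Bott's formula for affine Weyl groups gives
$$P_{\widetilde W}(q)=\prod_i\frac{[d_i]_q}{1-q^{d_i-1}},$$
where $d_i$ are the degrees of the finite Weyl group. For $A_{n-1}$ the degrees are $2,\dots,n$. Substituting $q^m$ yields the stated product, with $k$ running over the degrees $2\le k\le n$. The $k=1$ factor $\frac{[1]_{q^m}}{1-q^{0}}$ is not defined as written, since its denominator vanishes, so the product in the statement must be read as starting at $k=2$; I would state this explicitly when writing the proof.

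\emph{Expected obstacle.} The main point needing care is Step 3, specifically the bookkeeping of translation classes. One must check that the window condition $1\le i\le N$ picks exactly $m$ representatives from each $n$-class of inversions. This holds because shifting by $n$ maps inversions of $u$ to inversions of $u$, and shifts by $n,2n,\dots,(m-1)n$ are distinct modulo $N$.

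An alternative route, if the window model is avoided, is to argue directly that each $s_{i+jn}$ in $\phi(r_i)$ commutes with the others and that reduced $R$-words unfold to reduced $S$-words; this is essentially what admissibility gives. That route would yield the same identity $\ell(\phi(w))=m\,\widehat\ell(w)$.
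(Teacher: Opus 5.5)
Your proof is correct. Its last step, substituting $q\mapsto q^m$ into Bott's formula, is exactly what the paper does. The difference lies in how you obtain the key identity $\ell(\phi(w))=m\,\widehat\ell(w)$.

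The paper gets it in one line from M\"uhlherr's general result, Lemma \ref{lem:folding_length}(1). Lengths are additive along the unfolding of a reduced $R$-word, and $\ell(\phi(r_i))=m$ for every $i$ because the letters $s_{i+jn}$ pairwise commute. You instead verify the identity by hand in the affine-permutation model. There $\phi$ is literally the inclusion of period-$n$ affine permutations into period-$mn$ ones, and each $n$-translation class of inversions splits into exactly $m$ classes modulo $mn$.

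Your route is self-contained and elementary. It does not depend on M\"uhlherr's theorem, and it makes $\phi$ concrete as an inclusion of permutation groups. However, it is special to type $A$. The paper's route is shorter once the folding machinery is available. It is also the same mechanism (length as a weighted count of generators in a reduced word) that drives the $\widetilde B_n$ and $\widetilde C_n$ cases through Reiner's refined series, where generators unfold to different lengths.

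Your remark about the $k=1$ factor is also right. The factor $[1]_{q^m}/(1-q^{0})$ has a vanishing denominator, so the product in the statement must start at $k=2$, matching the degrees $2,\dots,n$ of $A_{n-1}$. The same correction applies to the paper's display of Bott's formula.
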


Next we consider folding pairs given in part (ii) of Table \ref{AffineFoldingTable}.  For these folding pairs, we realize $\widetilde B_n$ as a folding subgroup of a Coxeter group of affine type $D$ in three possible ways which we give in Table \ref{DescriptionAffineFoldingTable1}.

\begin{theorem}\label{thm:affine_type_B}
    Let $\widetilde B_n$ be a folding subgroup of a Coxeter group of affine type $D$ as in Table \ref{DescriptionAffineFoldingTable1}.  Then the corresponding unfolding series are, respectively, given by\\

    \begin{enumerate}
        \item $U_{\widetilde{B}_{n}}^{\widetilde{D}_{n+1}}(q)=\displaystyle\frac{[2]_q[3]_{-q}[4]_q}{(1-q)(1-q^3)(1+q^n)}\cdot\prod_{k=3}^n\frac{[2k+1]_q-q^k}{1-q^{2k-1}}$

%=\dfrac{[2]_{q}[3]_{-q}[4]_{q}\prod_{k=3}^{n}([2k+1]_{q}-q^k)}{(1-q)(1-q^3)\cdots(1-q^{2n-1})(1+q^n)}$\\
%$U_{\tilde{B}_{n}}^{\tilde{D}_{n+1}}(q)=\dfrac{U_{B_n}^{D_{n+1}}(q)}{(q^{n+1};q)_n}

\item $U_{\widetilde B_{n}}^{\widetilde D_{2n}}(q)=\displaystyle\left(\prod_{k=1}^{2n}\ [k]_{(-1)^kq}\right)\cdot\left(\prod_{k=1}^n\frac{1+q^{2(k-1)}}{1-q^{2(n+k)-3}}\right)$

%$\dfrac{[2]_{q}[3]_{-q}\cdots[2n-1]_{-q}[2n]_{q}\prod_{k=1}^{n-1}(1+q^{2k})}{(1-q^{2n-1})(1-q^{2n+1})\cdots(1-q^{4n-3})}$%\\
%$U_{\tilde B_{n}}^{\tilde D_{2n}}(q)=\dfrac{U_{B_n}^{A_{2n-1}}(q)\cdot\prod_{k=1}^{n-1}(1+q^{2k})}{(q^{2n-1};q^2)_n}=U_{B_n}^{A_{2n-1}}(q)\dfrac{(-q^2;q^2)_{n-1}}{(q^{2n-1};q^2)_n}$

\item $U_{\widetilde B_{n}}^{\widetilde D_{2n+1}}(q)=\displaystyle\left(\prod_{k=1}^{2n+1}\ [k]_{(-1)^kq}\right)\cdot\left(\prod_{k=1}^n\frac{1+q^{2(k-1)}}{1-q^{2(n+k)-1}}\right)$

%\dfrac{[2]_{q}[3]_{-q}\cdots[2n-1]_{-q}[2n]_{q}[2n+1]_{-q}\prod_{k=1}^{n-1}(1+q^{2k})}{(1-q^{2n+1})(1-q^{2n+3})\cdots(1-q^{4n-1})}$%\\
%$U_{\tilde B_{n}}^{\tilde D_{2n+1}}(q)=\dfrac{U_{B_n}^{A_{2n}}(q)\cdot\prod_{k=1}^{n-1}(1+q^{2k})}{(q^{2n+1};q^2)_n}=U_{B_n}^{A_{2n}}(q)\dfrac{(-q^2;q^2)_{n-1}}{(q^{2n+1};q^2)_n}$
    \end{enumerate}
    \label{thm:UnfoldingSeriesAffineTypeB}
\end{theorem}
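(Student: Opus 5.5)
The plan is to factor each affine unfolding series into a finite unfolding polynomial, already computed in Theorem \ref{thm:MainTheorem}, times a generating function over a lattice of translations. For each folding $\phi:\widetilde B_n\to \widetilde D_N$ with $N\in\{n+1,2n,2n+1\}$, the finite parabolic $B_n=\langle r_1,\dots,r_n\rangle$ (or whichever parabolic of $\widetilde B_n$ has finite type $B_n$) maps into a finite parabolic of $\widetilde D_N$ of type $D_{n+1}$, $A_{2n-1}$ or $A_{2n}$, with the induced folding as in Table \ref{DescriptionFiniteFoldingTable}. I would verify this directly from Table \ref{DescriptionAffineFoldingTable1}. The first step is a parabolic decomposition: every $w\in\widetilde B_n$ factors uniquely as $w=u\,v$ with $v\in B_n$ and $u$ a minimal coset representative in $\widetilde B_n/B_n$. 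I then want to show
\[
\ell(\phi(w))=\ell(\phi(u))+\ell(\phi(v)),
\]
with $\phi(u)$ minimal in its coset of the corresponding finite parabolic $W_J$ of $W$. Admissibility of the partition gives this. Because every $s$ in a block $I$ is simultaneously an ascent or a descent of $\phi(u)$, an $R$-ascent of $u$ unfolds to ascents for all $s\in I$. Thus $\phi(u)$ has no right descents in $J$, and the standard parabolic length additivity in $W$ applies. This yields
\[
U^{W}_{\widetilde B_n}(q)=U^{W_J}_{B_n}(q)\cdot \sum_{u\in \widetilde B_n^{B_n}} q^{\ell(\phi(u))}.
\]

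The second step is to compute the coset sum. Minimal coset representatives of $\widetilde B_n/B_n$ correspond to translations by the coroot lattice $Q^\vee$, taken as dominant representatives up to the finite group action. More usefully, I would use the Bott-type formula in $W$. The unfolded elements $\phi(t_\lambda)$ are translations in $\widetilde D_N$, whose length is $\sum_{\alpha>0}|\langle\lambda,\alpha\rangle|$. Then I write
\[
\sum_{u} q^{\ell(\phi(u))}=\frac{U^{W}_{\widetilde B_n}(q)}{U^{W_J}_{B_n}(q)}
\]
and evaluate the left side as a sum over the lattice of $\sigma$-fixed translations. I would organize the sum by chambers: restrict to $\lambda$ in the fixed sublattice and use the finite $B_n$-orbit structure. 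The coset representatives are $t_\lambda$ composed with a minimal element, and summing over dominant $\lambda$ gives a product over fundamental coweights of geometric series $1/(1-q^{\ell(\phi(t_{\omega_k}))})$, adjusted by orbit-size factors. I would compute $\ell(\phi(t_{\omega_k}))$ for the fixed fundamental coweights by counting roots of $D_N$ pairing with them. I expect these to come out as $2k-1$ in case (1), and as $2(n+k)-3$ and $2(n+k)-1$ in cases (2) and (3). The residual factors $(1+q^{2(k-1)})$ and $1/(1+q^n)$ would then be explained by the mismatch between the fixed lattice and the image of $Q^\vee(B_n)$, that is, by index-two phenomena.

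As a sanity check, I would first run the argument for Theorem \ref{thm:affine_type_A} and confirm that it reproduces the stated product. I would also check small cases such as $n=2,3$ by direct expansion.

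The main obstacle is the second step: turning the sum over coset representatives into a clean product. Lengths of $\phi(u)$ are not simply a linear function on a cone, because minimal coset representatives $t_\lambda w$ with $w\ne e$ contribute corrections. My first attempt would avoid this by using the equivalent formulation
\[
U^{W}_{\widetilde B_n}(q)=\sum_{v\in B_n}\ \sum_{\lambda} q^{\ell(\phi(t_\lambda v))}.
\]
Then, for each Weyl chamber, I would apply the identity $\ell(t_\lambda v)=\sum_{\alpha>0}|\langle\lambda,\alpha\rangle+\chi(v^{-1}\alpha<0)|$ in the unfolded group and sum geometric series chamber by chamber. After that, I would match the result against the claimed rational functions, multiplied by the finite polynomials from Theorem \ref{thm:MainTheorem}.
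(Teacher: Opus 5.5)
Your Step 1 is correct: it is the paper's Corollary~\ref{cor:ParabolicFactorizationFolding}, which also holds for affine foldings. But it only moves all of the content into the coset sum $\sum_u q^{\ell(\phi(u))}$, and Step 2 never computes that sum. Several of its ingredients are also false.

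\emph{Translations do not unfold to translations.} Your claim that $\phi(t_\lambda)$ is a translation of $\widetilde D_N$ holds only in case (1). In cases (2) and (3) the finite $B_n$ unfolds into a parabolic of type $A_{2n-1}$ or $A_{2n}$, which stabilizes an edge of the alcove rather than a special vertex, so the two semidirect decompositions do not match. Concretely, realize $\widetilde D_{2n}$ on $\mathbb{R}^{2n}$ with affine simple roots $1-x_1-x_2$, $x_i-x_{i+1}$ $(1\le i\le 2n-1)$, $x_{2n-1}+x_{2n}$. Then the reversal in Table~\ref{DescriptionAffineFoldingTable1} is conjugation by $f(x)=(\tfrac12,\dots,\tfrac12)-\mathrm{rev}(x)$. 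The element $g(x)=e_1+e_2+s_{e_1+e_{2n}}s_{e_2+e_{2n-1}}(x)$ lies in $\widetilde D_{2n}$ and commutes with $f$, so it lies in the folding subgroup. On $\mathrm{Fix}(f)$ it acts as translation by $\tfrac12(e_1+e_2-e_{2n-1}-e_{2n})$. So $g$ is the unfolding of a translation of $\widetilde B_n$, yet its linear part is nontrivial; the same happens in case (3).

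\emph{The product step is unsupported.} Your predicted denominators cannot arise as you describe. In case (1) you expect $\ell(\phi(t_{\omega_1}))=1$, but no nontrivial translation has length $1$. In Bott's formula the exponents in $1-q^{e}$ are exponents of the finite group, not translation lengths. A chamber-by-chamber sum gives a sum over faces of the dominant cone. That sum collapses to a product only through a genuine identity of Bott--Macdonald type, here a multi-parameter Poincar\'e series formula, which you do not supply. The appeal to ``index-two phenomena'' for the numerator factors is likewise unsupported.

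Note also that, as printed, parts (2) and (3) contain the $k=1$ factor $1+q^{0}=2$, so they have constant term $2$. The correct numerator is $\prod_{k=1}^{n-1}(1+q^{2k})$, which your proposed $n=2$ check would have revealed.

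The idea you are missing makes all lattice computations unnecessary, and it is the paper's proof. By Lemma~\ref{lem:folding_length}(1), $\ell(\phi(w))$ is a fixed linear combination of $\ell_{\widetilde B}(w)$ and $r(w)$, the number of occurrences of $r_0$ in a reduced word. This count is well defined because $r_0$ enters only relations of even order. The combinations are $\ell_{\widetilde B}+r$, $2\ell_{\widetilde B}-r$ and $2\ell_{\widetilde B}+r$ in the three cases. Hence the unfolding series are the specializations $\widetilde B_n(q,q)$, $\widetilde B_n(q^{-1},q^2)$ and $\widetilde B_n(q,q^2)$ of Reiner's bivariate formula (Proposition~\ref{prop:Reiner}). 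What remains is $q$-algebra. For part (1), use $[2k+1]_q-q^k=(1-q^k)(1+q^{k+1})/(1-q)$ together with $(-q;q)_n(q;q^2)_n=(q^{n+1};q)_n$. For part (2), use $[2k-1]_{-q}[2k]_q=(1+q^{2k-1})[k]_{q^2}$.
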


\begin{table}[h!]
    \centering

     \begin{tabular}{|c|c|c|c|}
\hline \rule{0pt}{3ex}
 & $\phi:\widehat{W}\to W$ & $\widehat{W}$ & $W$ \tabularnewline
\hline 
& & \multirow{25}{*}{\begin{tikzpicture}[
      thick,
      acteur/.style={
        circle,
        fill=black,
        thick,
        inner sep=2pt,
        minimum size=0.2cm
      }
    ] 
      \node (a1) at (1,2) [acteur,label=below:$r_{n-2}$]{};
      \node (a2) at (0,2)[acteur]{}; 
      \node (a3) at (-1,2) [acteur,label=below:$r_1$]{}; 
      \node (a4) at (-2,2) [acteur,label=below:$r_0$]{};  
      \node (a5) at (2,2.5) [acteur,label=above:$r_{n-1}$]{}; 
      \node (a6) at (2,1.5) [acteur,label=below:$r_{n}$]{};
      
      \draw (a1) -- (a2); 
      \draw[dashed] (a2) -- (a3); 
      \draw (a3) -- node[midway, above, sloped] {4} (a4);
      \draw (a1) -- (a5);
      \draw (a1) -- (a6);

      %\draw[blue, <->] (a5)--(a6);
%     \draw (a7) -- (a5);
%     \draw[red] (a9) -- (a5);
%     \draw[red] (a9) -- (a4);
%      \draw[red] (a5) -- (a3);
%      \draw[red] (a6) -- (a2);

    \end{tikzpicture}} & \multirow{6}{*}{\begin{tikzpicture}[
      thick,
      acteur/.style={
        circle,
        fill=black,
        thick,
        inner sep=2pt,
        minimum size=0.2cm
      }
    ] 
      \node (a1) at (1,2) [acteur,label=$s_{n-1}$]{};
      \node (a2) at (0,2)[acteur]{}; 
      \node (a3) at (-1,2) [acteur,label=$s_3$]{}; 
      \node (a4) at (-2,2) [acteur,label=$s_2$]{}; 
      \node (a5) at (2,2.5) [acteur,label=above:$s_{n}$]{}; 
      \node (a6) at (2,1.5) [acteur,label=below:$s_{n+1}$]{};
      \node (a7) at (-3,2.5) [acteur,label=above:$s_{0}$]{}; 
      \node (a8) at (-3,1.5) [acteur,label=below:$s_{1}$]{};
      
      \draw (a1) -- (a2); 
      \draw[dashed] (a2) -- (a3); 
      \draw (a3) -- (a4);
      \draw (a1) -- (a5);
      \draw (a1) -- (a6);
      \draw (a7)--(a4)--(a8);
      
      %\draw[blue, <->] (a5)--(a6);
      \draw[blue, <->] (a7)--(a8);
%     \draw (a7) -- (a5);
%     \draw[red] (a9) -- (a5);
%     \draw[red] (a9) -- (a4);
%      \draw[red] (a5) -- (a3);
%      \draw[red] (a6) -- (a2);

    \end{tikzpicture}}\tabularnewline
  1 &  $\phi:\widetilde B_{n}\to\widetilde D_{n+1}$ &&\\
& $\phi(r_0)=s_0s_1$ &&\\
   & && \\
  &  $\phi(r_i)=s_{i+1}$ && \\
   & $i\neq 0$ && \\

\cline{1-1} \cline{2-2} \cline{4-4}
& &  & \multirow{9}{*}{\begin{tikzpicture}[
      thick,
      acteur/.style={
        circle,
        fill=black,
        thick,
        inner sep=2pt,
        minimum size=0.2cm
      }
    ] 
      \node (a1) at (1,2) [acteur,label=$s_{0}$]{};
      \node (a2) at (1,1)[acteur,acteur,label=$s_{1}$]{}; 
      \node (a3) at (0,1.5) [acteur,label=$s_2$]{}; 
      \node (a4) at (-1,1.5) [acteur,label=$s_3$]{}; 
      \node (a5) at (-2,1.5) [acteur,label=above:$s_{n-2}$]{}; 
      \node (a6) at (-3,1.5) [acteur,label=above:$s_{n-1}$]{};
      \node (a7) at (-4,0.5) [acteur,label=above:$s_{n}$]{}; 
      \node (a8) at (-3,-0.5) [acteur,label=below:$s_{n+1}$]{};
      \node (a9) at (-2,-0.5) [acteur,label=below:$s_{n+2}$]{};
      \node (a10) at (-1,-0.5) [acteur,label=below:$s_{2n-3}$]{};
      \node (a11) at (0,-0.5) [acteur,label=below:$s_{2n-2}$]{};
      \node (a12) at (1,-0) [acteur,label=below:$s_{2n-1}$]{};
      \node (a13) at (1,-1) [acteur,label=below:$s_{2n}$]{};
      
      \draw (a1) -- (a3)--(a4);
      \draw (a2)--(a3);
      \draw[dashed] (a4) -- (a5); 
      \draw (a5) -- (a6)--(a7)--(a8)--(a9);
      \draw[dashed] (a9) -- (a10);
      \draw (a10) -- (a11)--(a12);
      \draw (a11)--(a13);
      
      \draw[blue, <->] (a3)--(a11);
      \draw[blue, <->] (a4)--(a10);
      \draw[blue, <->] (a5)--(a9);
      \draw[blue, <->] (a6)--(a8);
      \draw[blue, <->] (a1)edge[bend left=45](a13);
      \draw[blue, <->] (a2)edge[bend left=45](a12);
%     \draw (a7) -- (a5);
%     \draw[red] (a9) -- (a5);
%     \draw[red] (a9) -- (a4);
%      \draw[red] (a5) -- (a3);
%      \draw[red] (a6) -- (a2);

    \end{tikzpicture}}\tabularnewline
 &&  & \tabularnewline
2 & $\phi:\widetilde B_{n}\to\widetilde D_{2n}$ &  & \tabularnewline
  &  $\phi(r_0)=s_{n}$&& \\
 & & & \tabularnewline
&$\phi(r_i)=s_is_{2n-i}$ &&\\
 &   $i\neq 0$ && \\
 & & & \tabularnewline

 &  && \tabularnewline
\cline{1-1} \cline{2-2} \cline{4-4} 
& &  & \multirow{9}{*}{\begin{tikzpicture}[
      thick,
      acteur/.style={
        circle,
        fill=black,
        thick,
        inner sep=2pt,
        minimum size=0.2cm
      }
    ] 
      \node (a1) at (1,2) [acteur,label=$s_{0}$]{};
      \node (a2) at (1,1)[acteur,acteur,label=$s_{1}$]{}; 
      \node (a3) at (0,1.5) [acteur,label=$s_2$]{}; 
      \node (a4) at (-1,1.5) [acteur,label=$s_3$]{}; 
      \node (a5) at (-2,1.5) [acteur,label=above:$s_{n-1}$]{}; 
      \node (a6) at (-3,1.5) [acteur,label=above:$s_{n}$]{};
      %\node (a7) at (-4,0.5) [acteur,label=above:$s_{n}$]{}; 
      \node (a8) at (-3,-0.5) [acteur,label=below:$s_{n+1}$]{};
      \node (a9) at (-2,-0.5) [acteur,label=below:$s_{n+2}$]{};
      \node (a10) at (-1,-0.5) [acteur,label=below:$s_{2n-2}$]{};
      \node (a11) at (0,-0.5) [acteur,label=below:$s_{2n-1}$]{};
      \node (a12) at (1,-0) [acteur,label=below:$s_{2n}$]{};
      \node (a13) at (1,-1) [acteur,label=below:$s_{2n+1}$]{};
      
      \draw (a1) -- (a3)--(a4);
      \draw (a2)--(a3);
      \draw[dashed] (a4) -- (a5); 
      \draw (a5) -- (a6)edge[bend right=45](a8);
      \draw (a8)--(a9);
      \draw[dashed] (a9) -- (a10);
      \draw (a10) -- (a11)--(a12);
      \draw (a11)--(a13);
      
      \draw[blue, <->] (a3)--(a11);
      \draw[blue, <->] (a4)--(a10);
      \draw[blue, <->] (a5)--(a9);
      \draw[blue, <->] (a6)--(a8);
      \draw[blue, <->] (a1)edge[bend left=45](a13);
      \draw[blue, <->] (a2)edge[bend left=45](a12);
%     \draw (a7) -- (a5);
%     \draw[red] (a9) -- (a5);
%     \draw[red] (a9) -- (a4);
%      \draw[red] (a5) -- (a3);
%      \draw[red] (a6) -- (a2);

    \end{tikzpicture}}\tabularnewline
 &&  & \tabularnewline
3 &$\phi:\widetilde B_{n}\to\widetilde D_{2n+1}$ &  & \tabularnewline
 & $\phi(r_0)=s_{n}s_{n+1}s_{n}$ && \\
 &  && \tabularnewline
& $\phi(r_i)=s_is_{2n+1-i}$ &&\\
  &  $i\neq 0$ && \\
 &  && \tabularnewline

 &&  & \tabularnewline
\hline 
\end{tabular}
\vspace{10pt} % adjust this value as needed
    \caption{Affine foldings from Table \ref{AffineFoldingTable} part (ii).}
    \label{DescriptionAffineFoldingTable1}
\end{table}

Observe that each folding in Theorem \ref{thm:affine_type_B} has an analogous folding in finite type.  In particular, parts 1,2, and 3 of Theorem \ref{thm:affine_type_B} correspond to the folding pairs $(B_n,D_{n+1})$, $(B_n,A_{2n-1})$ and $(B_n,A_{2n})$ respectively.  Moreover, the unfolding polynomial of finite type appears as factor in the corresponding unfolding series of affine type.\\

Finally, we consider the seven affine folding pairs in part (iii) of Table \ref{AffineFoldingTable}.  In each of these folding pairs, we realize $\widetilde{C}_{n}$ as a folding subgroup of an affine Coxeter group of type $A$, $B$ or $D$.  The details of these foldings are given in Table \ref{DescriptionAffineFoldingTable2}.

%Consider the Coxeter groups of type $\tilde A,\tilde %B,\tilde C,\tilde D$ in relation to the foldings given in  %Table \ref{DescriptionAffineFoldingTable1}, Table %\ref{DescriptionAffineFoldingTable2}, and %\ref{tab:DescriptionAffineFoldingTableTypeA}.

\begin{theorem}\label{thm:affine_type_C}
    Let $\widetilde C_n$ be a folding subgroup of a Coxeter group of affine type $A, B$ or $D$ as in Table \ref{DescriptionAffineFoldingTable2}.  Then the corresponding unfolding series are, respectively, given by

    \begin{enumerate}
        \item $U_{\widetilde{C}_{n}}^{\widetilde{A}_{2n+1}}(q)=\displaystyle[n+1]_{-q}[n+1]_{(-1)^nq}\cdot\left(\prod_{\substack{k=1\\k\neq n+1}}^{2n+1}\dfrac{[k]_{(-1)^kq}}{1+(-q)^k}\right)$              
        
%$[2]_{q}[3]_{-q}\cdots[2n]_q[2n+1]_{-q}[n+1]_{q}\displaystyle\prod_{\substack{i=1\\i\neq %n+1}}^{2n+1}\dfrac{1}{1+(-q)^i}$\\
%$U_{\tilde{C}_{n}}^{\tilde{A}_{2n+1}}(q)=U_{B_n}^{A_{2n}}(q)\cdot[n+1]_{-q}\displaystyle\prod_{\substack{i=1\\i\neq n+1}}^{2n+1}\dfrac{1}{1+(-q)^i}=U_{B_n}^{A_{2n}}(q)\dfrac{(-q^3;q^2)_n}{(q^{2n+4};q^2)_n}$

\item $U_{\widetilde{C}_n}^{\widetilde{A}_{2n}}(q)=\displaystyle \prod_{k=1}^{2n}\frac{[k+1]_{(-1)^{k+1}q}}{1+(-q)^k}$

%$\dfrac{[2]_{q}[3]_{-q}\cdots[2n-1]_{-q}[2n]_{q}[2n+1]_{-q}}{(1-q)(1+q^2)(1-q^3)(1+q^4)\cdots(1+q^{2n-2})(1-q^{2n-1})(1+q^{2n})}$\\
%$U_{\tilde{C}_n}^{\tilde{A}_{2n}}(q)=\dfrac{U_{B_n}^{A_{2n}}(q)}{(1-q)(1+q^2)(1-q^3)(1+q^4)\cdots(1+q^{2n-2})(1-q^{2n-1})(1+q^{2n})}=\dfrac{U_{B_n}^{A_{2n}}(q)}{(q;-q)_{2n}}$

\item $U_{\widetilde{C}_n}^{\widetilde{A}_{2n-1}}(q)=\displaystyle \prod_{k=2}^{2n}\frac{[k]_{(-1)^{k}q}}{1+(-q)^{k-1}}$
%\textcolor{red}{If $k=1$, the denominator becomes $2$. Start $k$ at $2$?}

%$=\dfrac{[2]_{q}[3]_{-q}\cdots[2n-1]_{-q}[2n]_{q}}{(1-q)(1+q^2)(1-q^3)(1+q^4)\cdots(1+q^{2n-2})(1-q^{2n-1})}$\\
%$U_{\tilde{C}_n}^{\tilde{A}_{2n-1}}(q)=\dfrac{U_{B_n}^{A_{2n-1}}(q)}{(1-q)(1+q^2)(1-q^3)(1+q^4)\cdots(1+q^{2n-2})(1-q^{2n-1})}=\dfrac{U_{B_n}^{2n-1}(q)}{(q;-q)_{2n-1}}$
 
 \item $U_{\widetilde{C}_{n}}^{\widetilde{B}_{n+1}}(q)=\displaystyle\frac{[2]_q[3]_{-q}[4]_q[2]_{-q^{n+1}}}{(1-q)(1-q^3)(1-q^5)}\cdot\prod_{k=3}^n\frac{[2k+1]_q-q^k}{1-q^{2k+1}}$ 
 
% $=\dfrac{[2]_q[3]_{-q}[4]_q[2]_{-q^{n+1}}\prod_{k=3}^{n}([2k+1]_q-q^k)}{(1-q)(1-q^3)\cdots (1-q^{2n+1})}$\\
 %$U_{\tilde{C}_{n}}^{\tilde{B}_{n+1}}(q)=\dfrac{U_{B_n}^{D_{n+1}}(q)\cdot[2]_{-q^{n+1}}}{(1-q)(1-q^3)\cdots (1-q^{2n+1})}=\dfrac{U_{B_n}^{D_{n+1}}(q)\cdot[2]_{-q^{n+1}}}{(q;q^2)_{n+1}}$ 

\item $U_{\widetilde C_{n}}^{\widetilde D_{n+2}}(q)=\displaystyle[2]_q[3]_{-q}[4]_q\cdot\left(\prod_{k=3}^n[2k+1]_q-q^k\right)\cdot\left(\prod_{k=1}^n\frac{1+q^{k+1}}{1-q^{n+k+2}}\right)$

%$\dfrac{[2]_q[3]_{-q}[4]_q\prod_{k=1}^n(1+q^{k+1})\prod_{k=3}^{n}([2k+1]_q-q^k)}{(1-q^{n+3})(1-q^{n+4})\cdots(1-q^{2n+2})}$\\
%$U_{\tilde C_{n}}^{\tilde D_{n+2}}(q)=\dfrac{U_{B_n}^{D_{n+1}}(q)\cdot\prod_{k=1}^n(1+q^{k+1})}{(1-q^{n+3})(1-q^{n+4})\cdots(1-q^{2n+2})}=U_{B_n}^{D_{n+1}}(q)\dfrac{(-q^2;q)_n}{(q^{n+3};q)_n}$

\item $U_{\widetilde C_{n}}^{\widetilde C_{2n+1}}(q)=\displaystyle\left(\prod_{k=1}^{2n+1}[k]_{(-1)^kq}\right)\cdot\left(\prod_{k=1}^n\frac{1+q^{2k}}{1-q^{2(n+k)+1}}\right)$

%$\dfrac{[2]_{q}[3]_{-q}\cdots[2n-1]_{-q}[2n]_{q}[2n+1]_{-q}\prod_{k=1}^{n}(1+q^{2k})}{(1-%q^{2n+3})(1-q^{2n+5})\cdots(1-q^{4n+1})}$\\
%$U_{\tilde C_{n}}^{\tilde C_{2n+1}}(q)=\dfrac{U_{B_n}^{A_{2n}}(q)\cdot\prod_{k=1}^{n}%(1+q^{2k})}{(1-q^{2n+3})(1-q^{2n+5})\cdots(1-q^{4n+1})}=U_{B_n}^{A_{2n}}(q)\dfrac{(-%q^2;q^2)_n}{(q^{2n+3};q^2)_n}$

\item $U_{\widetilde C_{n}}^{\widetilde C_{2n}}(q)=\displaystyle\left(\prod_{k=1}^{2n}[k]_{(-1)^kq}\right)\cdot\left(\prod_{k=1}^n\frac{1+q^{2k}}{1-q^{2(n+k)-1}}\right)$

%$\dfrac{[2]_{q}[3]_{-q}\cdots[2n-1]_{-q}[2n]_{q}\prod_{k=1}^{n}(1+q^{2k})}{(1-q^{2n+1})(1-q^{2n+3})\cdots(1-q^{4n-1})}$\\
%$U_{\tilde C_{n}}^{\tilde C_{2n}}(q)=\dfrac{U_{B_n}^{A_{2n-1}}(q)\cdot\prod_{k=1}^{n}(1+q^{2k})}{(1-q^{2n+1})(1-q^{2n+3})\cdots(1-q^{4n-1})}=U_{B_n}^{A_{2n-1}}(q)\dfrac{(-q^2;q^2)_n}{(q^{2n+1};q^2)_n}$
    \end{enumerate}
    \label{thm:UnfoldingSeriesAffineTypeC}
\end{theorem}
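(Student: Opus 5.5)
The plan is to reduce each affine unfolding series to a finite unfolding polynomial from Theorem \ref{thm:MainTheorem}, times a generating function over a lattice of translations. This is the same mechanism that evidently underlies Theorem \ref{thm:affine_type_A} and Theorem \ref{thm:affine_type_B}.

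\textbf{Step 1: parabolic factorization.} Write $\widetilde C_n=\widehat W_0\cdot \widehat W^{0}$. Here $\widehat W_0$ is a finite maximal parabolic subgroup of type $B_n$, obtained by removing an end node ($r_0$ or $r_n$). The factor $\widehat W^0$ is the set of minimal coset representatives, which is in bijection with the coroot lattice of $C_n$.

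We choose the removed node so that $\phi(\widehat W_0)$ is exactly one of the finite foldings in Table \ref{DescriptionFiniteFoldingTable}:
\begin{itemize}
\item $(B_n,A_{2n})$ for $\widetilde A_{2n+1}$, $\widetilde A_{2n}$ and $\widetilde C_{2n+1}$;
\item $(B_n,A_{2n-1})$ for $\widetilde A_{2n-1}$ and $\widetilde C_{2n}$;
\item $(B_n,D_{n+1})$ for $\widetilde B_{n+1}$ and $\widetilde D_{n+2}$.
\end{itemize}
Admissibility of the partition implies that the unfolding $\phi$ sends minimal coset representatives of $\widehat W$ to minimal coset representatives of $W$ with respect to the parabolic subgroup $W_J$ generated by the corresponding orbit. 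Therefore $\ell(\phi(uv))=\ell(\phi(u))+\ell(\phi(v))$ for $u\in\widehat W_0$ and $v\in\widehat W^0$. This factorization is the key structural lemma. We prove it by noting that each generator $r_I$ acts on the root system of $W$ by sending an $I$-orbit of positive roots to negative roots all at once. Consequently $\phi(v)$ has no descents in $J$ exactly when $v$ has no descents in the corresponding set.

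\textbf{Step 2: separating the finite factor.} From Step 1,
$$U^{W}_{\widetilde C_n}(q)=U^{W_J}_{B_n}(q)\cdot \sum_{v\in \widehat W^0}q^{\ell(\phi(v))}.$$
The first factor is supplied by Theorem \ref{thm:MainTheorem}. This accounts for the numerators $\prod[k]_{(-1)^kq}$ and $[2]_q[3]_{-q}[4]_q\prod([2k+1]_q-q^k)$ in parts (4)--(7). In parts (1)--(3) we instead match the numerator after simplifying.

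\textbf{Step 3: the translation sum.} It remains to compute $\sum_{v}q^{\ell(\phi(v))}$. Using the Iwahori--Matsumoto formula $\ell(t_\lambda w)=\sum_{\alpha>0}|\langle\lambda,\alpha^\vee\rangle+\chi(w^{-1}\alpha<0)|$ in the ambient affine group, $\phi(v)$ becomes a translation by a $\sigma$-fixed coweight $\lambda$ composed with a finite part. Its length is then a sum of absolute values of linear forms in the coordinates of $\lambda$, evaluated over all roots of the ambient finite system.

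We sort coordinates into dominant chambers, using the fact that the minimal representatives correspond to one representative of each $W_0$-orbit of translations. The sum then becomes a multivariate geometric series. Concretely, we write $\lambda=\sum c_i\omega_i$ with $c_i\geq 0$ in terms of fundamental coweights of the folded system. The length in the ambient group is linear in the $c_i$, with slopes equal to the heights of $\phi$-images. Summing over each chamber gives factors of the form $1/(1-q^{h})$, weighted by the Poincaré polynomial of the stabilizer quotient. Those stabilizer corrections produce the factors $1+q^{2k}$ and $1+(-q)^k$.

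\textbf{Main obstacle.} I expect the hardest step to be Step 3, specifically getting the exponents $2(n+k)\pm1$ and $n+k+2$ right together with the $\pm$ sign twists. The difficulty is that the ambient length of a folded translation does not scale uniformly: long and short roots of $C_n$ unfold to orbits of different sizes, and in type $\widetilde A_{2n}$ the orbit $\{s_n,s_{n+1}\}$ contributes a length-$3$ element.

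To handle this, I would first compute the length of $\phi(t_{\omega_i})$ for each fundamental coweight. I would then prove additivity of ambient length on each face of the dominant cone, which holds because all relevant inner products keep a constant sign there. This yields the denominators. Finally, I would verify the resulting rational function against the stated product by induction on $n$, comparing ratios of consecutive $n$, and check small cases ($n=1,2$) directly.
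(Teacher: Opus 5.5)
\textbf{Steps 1--2 are correct.} They are Corollary \ref{cor:ParabolicFactorizationFolding} with $\widehat J=R\setminus\{r_0\}$ (or $R\setminus\{r_n\}$). Your finite factors $(B_n,A_{2n})$, $(B_n,A_{2n-1})$, $(B_n,D_{n+1})$ are matched correctly.

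\textbf{The gap is Step 3.} This is where all the content of the theorem lives, and you only sketch it. By Lemma \ref{lem:folding_length}(1), the ambient length is already a weighted length on $\widetilde C_n$, e.g.\ $2\ell(w)+r(w)+s(w)$ for $\widetilde A_{2n+1}$. So Step 3 amounts to proving, over minimal coset representatives $v$ for $\langle r_1,\dots,r_n\rangle$,
\[
\sum_{v} a^{r(v)}b^{s(v)}q^{\ell(v)}=\prod_{k=1}^{n}\frac{1+aq^{k}}{1-abq^{n+k}},
\]
and then specializing at $(a,b,q)=(q,q,q^2)$ and the like. Your sketch does not prove this.
\begin{itemize}
\item Minimal representatives correspond to all lattice points, not to one point per $W_0$-orbit as you say in Step 3.
\item Summing over lattice points grouped by the faces of the dominant cone gives a sum over $2^n$ faces, each term a ratio of finite Poincar\'e polynomials times geometric series.
\item That sum collapses to a product only through a Bott/Macdonald-type identity. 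Your sentence ``those stabilizer corrections produce the factors $1+q^{2k}$ and $1+(-q)^k$'' asserts this identity rather than deriving it.
\item The proposed closing induction on $n$ would need a recursion you have not identified.
\end{itemize}

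\textbf{How the paper proceeds.} It uses Lemma \ref{lem:folding_length}(1) to write down the seven linear relations between $\ell(\phi(w))$ and $\ell(w),r(w),s(w)$. It then specializes Reiner's formula for $\widetilde C_n(a,b,q)$ (Proposition \ref{prop:Reiner}). Your coset factor, when $r_0$ is removed, is exactly $\widetilde C_n(a,b,q)/\widetilde C_n(0,b,q)$. To finish your route you must either cite Reiner (or Macdonald's multi-parameter Poincar\'e series for affine Weyl groups) or prove the displayed product yourself.

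\textbf{Iwahori--Matsumoto in the ambient group adds nothing.} It is also delicate when the diagram automorphism moves the affine node, as for $\widetilde A_{2n+1}$ and $\widetilde C_{2n+1}$. There $W_J$ is not the ambient standard finite Weyl group, and your ``$\sigma$-fixed translation times finite part'' description needs proof.

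\textbf{Part (1) is misprinted.} Your planned check against the stated formulas will fail there. $\widetilde C_n(q,q,q^2)$ equals the displayed expression only with $[n+1]_{(-1)^{n+1}q}$ in place of $[n+1]_{(-1)^nq}$. For example, at $n=2$ the printed formula begins $1+2q+\cdots$, yet every generator unfolds to length $2$ or $3$.
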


Similar to Theorem \ref{thm:affine_type_B}, each folding pair in Theorem \ref{thm:affine_type_C} has an associated folding pair of finite type whose unfolding polynomial appears as a factor of the corresponding affine unfolding series.\\

Unfolding polynomials studied in this paper have a close connection with other distribution functions for Coxeter groups.  In \cite{reiner1995distribution}, Reiner studies the $q$-Eulerian distribution for general Coxeter groups.  As part of his work, he calculates a certain distribution involving the frequency certain generators appearing in reduced words (see Proposition \ref{prop:Reiner}).  We show in Section \ref{S:Affine_foldings}, that some unfolding polynomials can be realized a specialization of these distributions (in fact, they play a critical role in the proof of Theorems \ref{thm:affine_type_B} and \ref{thm:affine_type_C}). \\

We remark that admissible set partitions and folding subgroups extend beyond the cases of finite and affine Coxeter groups.  As far as the authors know, unfolding polynomials are largely unknown for general folding subgroups. Several examples of admissible partitions beyond the finite and affine types are showcased by Elias and Heng in \cite{elias2024coxeter}.

\subsection*{Acknowledgements} We would like to thank Kirill Zainoulline for helpful discussions on Coxeter foldings.  The authors are supported by a grant from the Simons Foundation 941273.

\input{affinefoldingtable2}

\section{Preliminaries}

Let $W$ be a \textbf{Coxeter group} with simple generating set $S$ of rank $n$. Specifically, $W$ is generated by $S=\{s_1,\ldots, s_n\}$ with relations of the form
$$(s_is_j)^{m(s_i,s_j)}=e$$ for some value $m(s_i,s_j)\in\mathbb{Z}^+\cup\lbrace\infty\rbrace$ where $m(s_i,s_j) = 1$ if and only if $i=j$.  Each Coxeter group $W$ is equipped with a \textbf{length function} $\ell:W\to\mathbb{Z}_{\geq 0}$ where $\ell(w)$ denotes the minimum number of generators in $S$ needed to express $w$. Such a minimum-length expression is called a \textbf{reduced expression} for $w$.  The \textbf{Bruhat partial order} $\leq$ on $W$ is defined as follows: For $v,w\in W$,  we define $v\leq w$ if every reduced expression for $w$ has a subword that is a reduced expression for $v$. As a consequence, $v<w$ implies $\ell(v)<\ell(w)$ and hence the Bruhat order respects length. \\

% and we say that $(W,S)$ is a Coxeter system. 

%This system can be represented by a Coxeter \textit{graph} $\Gamma$ whose node %set is $S$ and whose edges are the unordered pairs $\lbrace s_i,s_j\rbrace$ %such that $m(s_i,s_j)\geq 3$. The edges with $m(s_i,s_j)\geq 4$ are labeled by %that number. We say that the system is irreducible if its Coxeter graph is %connected.\\

%Also, a Coxeter graph is simply-laced if $m(s_is_j)=2\text{ or } 3$ for all $s_i,s_j\in S$ and multiply-laced if $m(s_is_j)\geq 4$ for some $s_i,s_j\in S$.\\

%A \textit{subword} of a word $w=s_{i_1}s_{i_2}\cdots s_{i_{\ell(w)}}$ is a word %$v$ of the form $v=s_{j_1}s_{j_2}\cdots s_{j_{l(v)}}$, where %$j_1,j_2,\ldots,j_{\ell(v)}$ is a subsequence of $i_1,i_2,\ldots,i_{\ell(w)}$.\\

%A covering in Bruhat order, denoted $v\vartriangleleft w$, means that $v < w$ and $\ell(w)=\ell(v) + 1 $. So, the Bruhat order is a graded poset whose rank function is the length function and the Bruhat graph of $W$ is the directed graph whose nodes are the elements of $W$ and whose edges are given by the coverings $\vartriangleleft$.\\

For any $J\subseteq S$, let $W_J=\langle J \rangle$ denote the corresponding parabolic subgroup of $W$ generated by $J$.  Let $W^J$ denote the set of minimal length coset representatives of $W/W_J$.  Equivalently, we have
$$W^J= \lbrace w \in W : ws > w \text{ for all } s\in J\rbrace.$$

%We say that $w\in W^J$ if and only if no reduced expression for $w$ ends with a %letter from $J$. $W^J$ is also regarded as the set of $w\in W$ such that $D_R(w)\cap J=\emptyset$, where $D_R(w)$ is the right-descent set of $w$.\\

Every $w\in W$ has a unique factorization $w=w^J\cdot w_J$ such that $w^J\in W^J$ and $w_J\in W_J$ and $\ell(w) = \ell(w^J) + \ell(w_J)$ (see \cite{bjorner2005combinatorics}).  This factorization is called the \textbf{parabolic decomposition of $w$} with respect to $J$.  
The fact that parabolic decompositions are length preserving implies the following proposition.\\

%The \textbf{Poincar\'e polynomial} of $K\subseteq W$, denoted $K(q)$, is %defined as 
%$$K(q):=\displaystyle\sum_{w\in K}q^{\ell(w)}.$$  

\begin{proposition}\label{prop:parabolic_factorization}
    For any $J\subseteq S$, we have 
    $$\displaystyle\sum_{w\in W}q^{\ell(w)}=\left(\displaystyle\sum_{v\in W^J}q^{\ell(v)}\right)\cdot \left(\sum_{u\in W_J}q^{\ell(u)}\right).$$     
\end{proposition}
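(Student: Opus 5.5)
The plan is to deduce the identity directly from the parabolic decomposition stated just before the proposition. It says every $w\in W$ can be written uniquely as $w=w^J\cdot w_J$ with $w^J\in W^J$ and $w_J\in W_J$, and that $\ell(w)=\ell(w^J)+\ell(w_J)$. From this we get a map
$$\Psi: W^J\times W_J\to W,\qquad (v,u)\mapsto vu,$$
and the first step is to check that $\Psi$ is a bijection. Surjectivity is the existence half of the decomposition. Injectivity is the uniqueness half: if $vu=v'u'$ with $v,v'\in W^J$ and $u,u'\in W_J$, then both pairs are parabolic decompositions of the same element, so $v=v'$ and $u=u'$. One could also argue injectivity directly: $vW_J=v'W_J$, and $W^J$ contains exactly one element of each left coset, namely its unique minimal-length element.

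The second step is to reindex the sum over $W$ through $\Psi$ and apply the length additivity $\ell(vu)=\ell(v)+\ell(u)$:
$$\sum_{w\in W}q^{\ell(w)}=\sum_{(v,u)\in W^J\times W_J}q^{\ell(v)+\ell(u)}=\left(\sum_{v\in W^J}q^{\ell(v)}\right)\left(\sum_{u\in W_J}q^{\ell(u)}\right).$$
The last equality is just the factorization of a sum over a Cartesian product.

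When $W$ is infinite (the affine case, which the paper also needs), these are formal power series. Each coefficient is still finite, because a finitely generated Coxeter group has only finitely many elements of each given length. So the rearrangement is legitimate coefficientwise. There is no real obstacle here. The only point requiring care is citing uniqueness of the parabolic decomposition, rather than just existence, so that $\Psi$ is injective and no term is counted twice.
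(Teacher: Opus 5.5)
Your proposal is correct and is essentially the paper's own argument: the paper deduces the identity in one line from the unique, length-additive parabolic decomposition $w=w^J\cdot w_J$, which is exactly your bijection $W^J\times W_J\to W$ combined with $\ell(vu)=\ell(v)+\ell(u)$. Your remark that, for infinite $W$, the identity holds coefficientwise as formal power series is a point the paper leaves implicit.
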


%\noindent A \textit{parabolic subgroup} of a Coxeter system $(W,S)$, denoted $W_J$, is a subgroup of $W$ generated by the set $J\subseteq S$. The system $(W_J, J)$ is also a Coxeter group. If $\ell_J$ is the length function on the subgroup $W_J$, then $\ell_J (w) = \ell(w)$ for all $w\in W_J$. The Coxeter graph for $(W_J, J)$ is obtained by removing all nodes in $S-J$ and their incident edges from the graph for $(W, S)$. If $W_J$ is finite, then it has a unique maximal element denoted by $w_0(J)$.\\

%\noindent Define the set $W^J = \lbrace w \in W : ws > w \text{ for all } s\in J\rbrace$. We say that $w\in W^J$ if and only if no reduced expression for $w$ ends with a letter from $J$. Equivalently, defining the \textit{right descents} set of $w\in W$ as $D_R(w)=\lbrace s\in S:ws<w\rbrace$, $W^J$ is the set of $w\in W$ such that $D_R(w)\cap J=\emptyset$. 

%The set $W^J$ provides a factorization of $W$, that is, every $w\in W$ has a unique factorization $w=w^J\cdot w_J$ such that $w^J\in W^J$ and $w_J\in W_J$ and $\ell(w) = \ell(w^J) + \ell(w_J)$. As a consequence, each left coset $wW_J$ has a unique representative of minimal length. The system of such minimal coset representatives is $W^J$. In particular, we have\\

\subsection{Folding subgroups and parabolic decompositions}

Let $\widehat W$ be a folding subgroup of $W$ corresponding to an admissible set partition $\widehat S$ of $S$.  The corresponding partition function gives a map $\pi:S\rightarrow R$ where $R$ denotes the generators of $\widehat W$.
In particular, we have that $\pi(s)=r$ if and only if $s\leq \phi(r)$ where $\phi:\widehat W\rightarrow W$ denotes the embedding homomorphism.  The following properties are proved by M\"uhlherr in \cite{muhlherr1993coxeter} and were later (independently) proved in  ~\cite{elias2024coxeter}.\\

\begin{lemma}\label{lem:folding_length}
Let $\widehat W$ be a folding subgroup with the notation above and let $w=r_{i_1}\cdots r_{i_k}\in \widehat W$ be a reduced expression in the generators of $R$.  Then the following are true:
\begin{enumerate}
\item $\displaystyle \ell(\phi(w))=\sum_{j=1}^k \ell(\phi(r_{i_j})).$
\item Let $s\in \pi^{-1}(r)$.  Then $w\cdot r<w$ in $\widehat W$ if and only if $\phi(w)\cdot s<\phi(w)$ in $W$.  
\end{enumerate}
\end{lemma}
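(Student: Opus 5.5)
The statement has two parts. My plan is to prove (2) first, in a form that does not use (1), and then derive (1) from it by induction on $k$. The basic tool is the following consequence of admissibility together with the parabolic decomposition.

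Fix $x=\phi(w)$ with $w\in\widehat W$, and a block $I\in\widehat S$ with $r=r_I$. Write $x=x^I\cdot x_I$. By admissibility there are only two cases.
\begin{itemize}
\item If $xs>x$ for all $s\in I$, then $x\in W^I$. Hence
$$\ell(x\,r_I)=\ell(x)+\ell(r_I).$$
\item If $xs<x$ for all $s\in I$, then every $s\in I$ is a right descent of $x_I$. So $x_I=r_I$, because the longest element is the only element of the finite group $W_I$ with full descent set. Hence
$$\ell(x\,r_I)=\ell(x)-\ell(r_I).$$
\end{itemize}
Thus for every $w\in\widehat W$ and $r\in R$ we have $\ell(\phi(wr))=\ell(\phi(w))\pm\ell(\phi(r))$. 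The sign is $+$ exactly when some (equivalently every) $s\in\pi^{-1}(r)$ is an ascent of $\phi(w)$.

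Given this dichotomy, both statements reduce to one claim: $\phi(w)s>\phi(w)$ for $s\in\pi^{-1}(r)$ if and only if $\ell_R(wr)>\ell_R(w)$, where $\ell_R$ is the length in $(\widehat W,R)$.

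To prove the claim I would use inversion sets (reflections), working in the geometric representation. For $x\in W$ let $N(x)=\{t\in T: \ell(xt)<\ell(x)\}$, where $T$ is the set of reflections of $W$. Recall that $|N(x)|=\ell(x)$ and $N(xy)=N(y)\triangle y^{-1}N(x)y$. For a reflection $\hat t$ of $\widehat W$, conjugate to some $r_I$ by $\hat t=u r_I u^{-1}$ with $u\in\widehat W$, set
$$T(\hat t)=\phi(u)\,\bigl(T\cap W_I\bigr)\,\phi(u)^{-1},$$
which has exactly $\ell(r_I)$ elements. The plan is to show three things.
\begin{itemize}
\item $T(\hat t)$ is well defined, independent of the choice of $u$ and $I$.
\item Distinct reflections $\hat t$ give disjoint sets $T(\hat t)$.
\item For every $w\in\widehat W$, the inversion set $N(\phi(w))$ is the disjoint union of the $T(\hat t)$ over the inversions $\hat t$ of $w$ in $\widehat W$.
\end{itemize}
The third point would be proved by induction on $\ell_R(w)$, using the multiplicative formula for $N$ and the dichotomy above. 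The dichotomy guarantees that multiplying by $r_I$ either adds or removes the whole block $T\cap W_I$, never a proper part of it.

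Once this decomposition is established, $r$ is a right descent of $w$ in $\widehat W$ exactly when $r\in N_{\widehat W}(w)$. This happens exactly when $T\cap W_I\subseteq N(\phi(w))$, which is exactly when $s\in N(\phi(w))$ for $s\in I$. This is (2).

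Part (1) then follows by induction on $k$. If $w=w'r_{i_k}$ is reduced, then $w'r_{i_k}>w'$. By (2) the elements of $\pi^{-1}(r_{i_k})$ are ascents of $\phi(w')$, so the first bullet gives $\ell(\phi(w))=\ell(\phi(w'))+\ell(\phi(r_{i_k}))$.

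The main obstacle is the well-definedness and disjointness of the blocks $T(\hat t)$ in the induction step. Concretely, I must rule out that multiplying by $r_I$ removes a block $\phi(u)(T\cap W_I)\phi(u)^{-1}$ that was contributed by an inversion of $w$ other than the new one. My first attempt would be to show that $\phi(w)(\Phi_I^+)$ lies entirely in $\Phi^+$ or entirely in $\Phi^-$. This is exactly what admissibility says for the simple roots of $I$, and it extends to all of $\Phi_I^+$ since these are nonnegative combinations of those simple roots. I would then identify the roots of each block with the positive roots of a single rank-$|I|$ finite subsystem, so that each block is a single indecomposable unit of inversions.
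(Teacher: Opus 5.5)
The paper does not prove this lemma; it quotes it from M\"uhlherr and Elias--Heng.

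\textbf{What works.} You take Theorem~\ref{thm:folding_subgroup} as given, which is legitimate here since the paper states it first. Then reflections and inversion sets of $(\widehat W,R)$ make sense. Your induction on $\ell_R(w)$ via $N(xy)=N(y)\triangle y^{-1}N(x)y$ needs only two facts: (a) $T(\hat t)$ is well defined, and (b) $T(\hat t)\cap T(\hat t')=\emptyset$ for $\hat t\neq\hat t'$. Given these, (2) and then (1) follow exactly as you say.

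\textbf{The gap.} Facts (a) and (b) carry the whole content of the lemma, and your sketch does not establish them. Sign-coherence of $v(\Phi_I^+)$ (identifying $v\in\widehat W$ with $\phi(v)$) is a statement about one block at a time. So is the remark that this set is a positive system of a rank-$|I|$ subsystem. Neither rules out two blocks $u(\Phi_I)$ and $u'(\Phi_{I'})$ meeting in a proper nonempty subset, which is precisely what (b) must exclude. Fact (a) is not formal either, because $r_I$ need not determine $W_I$. For example, for $\phi(r_n)=s_ns_{n+1}s_n$ in $B_n\to A_{2n}$, $r_I$ is a single reflection. Hence $vr_Iv^{-1}=r_{I'}$ does not by itself give $v(\Phi_I)=\Phi_{I'}$.

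\textbf{The missing idea} is to apply admissibility to a second, auxiliary element of $\widehat W$.

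\emph{Claim:} if $v\in\widehat W$ and $v(\Phi_I)\cap\Phi_{I'}\neq\emptyset$, then $v(\Phi_I)=\Phi_{I'}$ and $vr_Iv^{-1}=r_{I'}$.

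\emph{Proof of the claim.} Replacing $v$ by $vr_I$ changes neither $v(\Phi_I)$ nor $vr_Iv^{-1}$, so assume $v(\Phi_I^+)\subseteq\Phi^+$.
\begin{itemize}
\item Pick $\beta\in\Phi_I^+$ with $v\beta\in\Phi_{I'}$. Then $v\beta\in\Phi^+_{I'}$, so $r_{I'}v\beta<0$.
\item Admissibility applied to $r_{I'}v\in\widehat W$ now forces $r_{I'}v(\Phi_I^+)\subseteq\Phi^-$.
\item The positive roots sent to negative roots by $r_{I'}$ are exactly $\Phi^+_{I'}$. Hence $v(\Phi_I^+)\subseteq\Phi^+_{I'}$.
\item The same argument for $v^{-1}$, which applies since $\beta\in v^{-1}(\Phi_{I'})\cap\Phi_I$, gives $|\Phi^+_{I'}|\le|\Phi^+_I|$. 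Therefore $v(\Phi_I^+)=\Phi^+_{I'}$.
\item Finally, $vr_Iv^{-1}$ lies in $W_{I'}$, since it is generated by the reflections in the roots $v\alpha_s\in\Phi_{I'}$, $s\in I$. It maps $\Phi^+_{I'}$ onto $\Phi^-_{I'}$, so it equals $r_{I'}$.
\end{itemize}

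\emph{Consequences.} The claim gives (b) directly. It also gives (a): suppose $vr_Iv^{-1}=r_{I'}$ and $v(\Phi_I^+)\subseteq\Phi^+$. For $\beta\in\Phi_I^+$ we have $r_{I'}(v\beta)=v(r_I\beta)<0$, so $v\beta\in\Phi^+_{I'}$. The blocks therefore meet, and the claim yields $v(\Phi_I)=\Phi_{I'}$.

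With this claim inserted, your argument goes through.
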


The primary tool we use to prove Theorem \ref{thm:MainTheorem} is to show that the unfolding polynomial respects the factorization of the Poincar\'e polynomial in Proposition \ref{prop:parabolic_factorization}.  \\

%Using  Lemma \ref{lem:folding_length}, we prove the following proposition.\\

\begin{proposition}\label{prop:folding_parabolic}
Let $\widehat{J}\subseteq R$ and define 
$$J:=\lbrace s\in\pi^{-1}(r):r\in\widehat{J}\rbrace\subseteq S.$$ 
Consider the parabolic subgroups $\widehat{W}_{\widehat{J}}$ and $W_J$ of $\widehat W$ and $W$, respectively. Then the following are true:
\begin{enumerate}
    \item If $w\in\widehat{W}_{\widehat{J}}$, then $\phi(w)\in W_J$.
    \label{ParabolicSubgroupsUnfoldingProposition}
    \item If $w\in\widehat{W}^{\widehat{J}}$, then $\phi(w)\in W^J$.
    \label{ParabolicQuotientsUnfoldingProposition}
\end{enumerate}    
\end{proposition}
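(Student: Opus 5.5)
Part (1) follows directly from the definition of the unfolding. Take $w\in\widehat W_{\widehat J}$ and write it as a word $w=r_{i_1}\cdots r_{i_k}$ with every $r_{i_j}\in\widehat J$. Then $\phi(w)=\phi(r_{i_1})\cdots\phi(r_{i_k})$. Each $\phi(r_{i_j})$ is the longest element $r_I$ of $W_I$, where $I=\pi^{-1}(r_{i_j})$. So it is a product of generators $s$ with $\pi(s)=r_{i_j}\in\widehat J$, that is, of generators in $J$. Hence $\phi(w)\in W_J$.

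For part (2) we use the characterization $W^J=\{w\in W: ws>w \text{ for all } s\in J\}$ from the preliminaries. Let $w\in\widehat W^{\widehat J}$ and fix $s\in J$, with $r=\pi(s)\in\widehat J$. Since $w$ is a minimal length coset representative, $w\cdot r>w$ in $\widehat W$, so $w\cdot r\not< w$. By Lemma \ref{lem:folding_length}(2), applied to $s\in\pi^{-1}(r)$, we get $\phi(w)\cdot s\not<\phi(w)$ in $W$. Because $\ell(\phi(w)s)=\ell(\phi(w))\pm1$, this forces $\phi(w)s>\phi(w)$. As $s\in J$ was arbitrary, $\phi(w)\in W^J$.

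The only subtle point is making sure Lemma \ref{lem:folding_length}(2) is applied correctly: it concerns right multiplication by $r$ in $\widehat W$, and $r$ ranges over all of $\widehat J$, so that every $s\in J$ is covered. The lemma gives an equivalence, so the contrapositive direction used above is legitimate. Alternatively, one can use admissibility directly: for the element $\phi(w)\in\widehat W\subseteq W$, the $s\in I$ either all ascend or all descend. If they all descended, then $\phi(w)$ would have reduced expressions ending in $r_I=\phi(r)$, giving $\ell(\phi(wr))<\ell(\phi(w))$. By Lemma \ref{lem:folding_length}(1) this contradicts $\ell_{\widehat W}(wr)>\ell_{\widehat W}(w)$. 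Either route gives the result with no real obstacle.
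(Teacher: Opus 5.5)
Your proof is correct and follows the paper's argument: part (1) holds because each $\phi(r)$ is a product of generators in $\pi^{-1}(r)$, and part (2) comes from the descent equivalence in Lemma~\ref{lem:folding_length}(2). In part (2) you fix $s\in J$ and use the implication $\phi(w)s<\phi(w)\Rightarrow w\pi(s)<w$, which is exactly the direction needed; the paper phrases the argument through the descents of $w$ and leaves this converse implicit, and your alternative route via admissibility and Lemma~\ref{lem:folding_length}(1) also works.
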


\begin{proof}
Part 1 follows immediately from the definition of $J$.  To prove part 2, let $w\in\widehat{W}^{\widehat{J}}$ and suppose $r$ is right descent of $w$ (i.e. $w\cdot r<w$).  Then $r\notin\widehat{J}$ and, by Lemma \ref{lem:folding_length}, $\phi(w)\cdot s<\phi(w)$ for all $s\in\pi^{-1}(r)$. Since $r\notin\tilde{J}$, we have $s\notin J$ and thus $\phi(w)\in W^J$.

%     Let $w=r_1r_2\cdots r_k$ be a reduced expression for $w\in \tilde{W}_{\tilde{J}}$. Then $\phi(w)=\phi(r_1)\phi(r_2)\cdots\phi(r_k)$ where $\phi(r_i)$ is a product of $S-$generators for each $i$ and $s\leq \phi(r_i)$ for each of those generators. Then $\pi(s)=r_i$ and $s\in\pi^{-1}(r_i)$. Thus, $s\in J$ and $\phi(w)\in W_J$, proving part 1.\\

%     To prove part 2,  let $w\in\tilde{W}^{\tilde{J}}$ be reduced. Then $D_R(w)\cap\tilde{J}=\emptyset$. Suppose $r\in D_R(w)$. Then $r\notin\tilde{J}$ and $wr<w$. By Lemma \ref{lem:folding_length}, $\phi(w)s<\phi(w)$ for all $s\in\pi^{-1}(r)$. Then $s\in D_R(\phi(w))$. Since $r\notin\tilde{J}$, then $s\notin J$. So $D_R(\phi(w))\cap J=\emptyset$. Thus $\phi(w)\in W^J$.\\
     
\end{proof}

One consequence of Proposition \ref{prop:folding_parabolic} is that if $w=vu$ is a parabolic decomposition with respect to $\widehat J$ in $\widehat W$, then $\phi(w)=\phi(v)\phi(u)$ is a parabolic decomposition with respect to $J$ in $W$.  For any $K\subseteq \widehat W$, we define the unfolding series of $K$ in $W$ as 
$$U_{K}^W(q):=\sum_{w\in K}q^{\ell(\phi(w))}.$$

\begin{coro}
\label{cor:ParabolicFactorizationFolding}
Let $\widehat W$ be a folding subgroup of $W$ with $\widehat J\subset R$ and $J\subset S$ as in Proposition \ref{prop:folding_parabolic}.  Then 
\begin{equation*}U_{\widehat{W}}^W(q)=U_{\widehat{W}^{\widehat{J}}}^W(q)\cdot U_{\widehat{W}_{\widehat{J}}}^{W_J}(q)
    %\label{UnfoldingPolynomialFactorization}
    \end{equation*}
\end{coro}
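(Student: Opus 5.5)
We use the parabolic decomposition in $\widehat W$ and check that it is carried by $\phi$ to a length-additive factorization in $W$. Every $w\in\widehat W$ factors uniquely as $w=v u$ with $v\in\widehat W^{\widehat J}$ and $u\in\widehat W_{\widehat J}$, and $\ell_{\widehat W}(w)=\ell_{\widehat W}(v)+\ell_{\widehat W}(u)$. This gives a bijection $\widehat W\to \widehat W^{\widehat J}\times \widehat W_{\widehat J}$. It therefore suffices to show that
$$\ell(\phi(w))=\ell(\phi(v))+\ell(\phi(u))$$
for every such pair, where $\ell$ is the length in $W$. Summing $q^{\ell(\phi(w))}$ over the bijection will then factor the series as a product.

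For the length identity we have two routes, and we will record both.

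\emph{Route 1.} By Proposition \ref{prop:folding_parabolic}, $\phi(v)\in W^J$ and $\phi(u)\in W_J$. Since $\phi$ is a homomorphism, $\phi(w)=\phi(v)\phi(u)$. Uniqueness of the parabolic decomposition in $W$ shows this is the decomposition of $\phi(w)$ with respect to $J$. Hence the lengths add by the length-additivity of parabolic decompositions stated before Proposition \ref{prop:parabolic_factorization}.

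\emph{Route 2.} Concatenate a reduced $R$-word for $v$ with one for $u$. Length-additivity in $\widehat W$ makes the result a reduced $R$-word for $w$. Lemma \ref{lem:folding_length}(1) then writes $\ell(\phi(w))$ as the sum of $\ell(\phi(r_{i_j}))$ over its letters. This sum splits as the corresponding sum for $v$ plus that for $u$, and Lemma \ref{lem:folding_length}(1) identifies these with $\ell(\phi(v))$ and $\ell(\phi(u))$.

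It remains to identify the second factor with $U_{\widehat W_{\widehat J}}^{W_J}(q)$. The restriction of $\phi$ to $\widehat W_{\widehat J}$ lands in $W_J$ by Proposition \ref{prop:folding_parabolic}(1). For $x\in W_J$, the length in $W_J$ with respect to $J$ equals the length in $W$, a standard fact for parabolic subgroups. So $\sum_{u\in\widehat W_{\widehat J}}q^{\ell(\phi(u))}$ is exactly $U_{\widehat W_{\widehat J}}^{W_J}(q)$.

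Combining these,
$$U_{\widehat W}^W(q)=\sum_{v,u}q^{\ell(\phi(v))+\ell(\phi(u))}=U_{\widehat W^{\widehat J}}^W(q)\cdot U_{\widehat W_{\widehat J}}^{W_J}(q).$$
When $\widehat W$ is infinite, all sums are formal power series. Each coefficient is finite because $\ell(\phi(w))\ge \ell_{\widehat W}(w)$, and there are finitely many elements of each $\widehat W$-length.

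The only delicate point is the length additivity $\ell(\phi(vu))=\ell(\phi(v))+\ell(\phi(u))$. Route 1 settles it once Proposition \ref{prop:folding_parabolic}(2) is granted, and Route 2 gives an independent check.
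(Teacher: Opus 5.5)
Your proposal is correct, and your Route 1 is exactly the paper's argument: Proposition \ref{prop:folding_parabolic} carries the parabolic decomposition $w=vu$ in $\widehat W$ to the parabolic decomposition $\phi(w)=\phi(v)\phi(u)$ in $W$, and length-additivity (the fact behind Proposition \ref{prop:parabolic_factorization}) then splits the series. Your Route 2 is also valid: it applies Lemma \ref{lem:folding_length}(1) to a concatenated reduced word, and so it does not need Proposition \ref{prop:folding_parabolic}(2) at all.
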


\begin{proof}
The corollary follows from Propositions \ref{prop:parabolic_factorization} and \ref{prop:folding_parabolic}.
\end{proof}

\section{Coxeter foldings of finite type}

In this section, we present a proof of Theorem \ref{thm:MainTheorem} by inducting on the formula found in Corollary \ref{cor:ParabolicFactorizationFolding} through a careful choice of $\widehat J\subset R$.   For parts (1)-(3) in Theorem \ref{thm:MainTheorem}, we find an explicit formula for $U_{\widehat{W}^{\widehat{J}}}^W(q)$ and observe that the second factor $U_{\widehat{W}_{\widehat{J}}}^{W_J}(q)$ belongs to a lower rank case of the same calculation.  For parts (4) and (5) of Theorem \ref{thm:MainTheorem}, we find explicit formulas for both $U_{\widehat{W}^{\widehat{J}}}^W(q)$ and $U_{\widehat{W}_{\widehat{J}}}^W(q)$.

\subsection{Classical foldings}

Since the folding subgroup $\widehat{W}$ is of type $B_n$ for each classical folding, we start by describing the parabolic subgroup $\widehat{W}_{\widehat{J}}$ and the respective system of minimal coset representatives $\widehat{W}^{\widehat{J}}$. Then we will describe their respective unfolding in the group $W$ for each type: $A_{2n-1}$, $A_{2n}$, and $D_{n+1}$.\\

%\noindent For the unfolding of $B_n$, the factorization of %$U_{\tilde{W}}^W(q)$ in Corollary \ref{ParabolicFactorizationFolding} %leads to a recursive formula. 

%We  and then use induction to find formulas 1, 2, and 3 in Theorem \ref{thm:MainTheorem}.\\

%\noindent 

%\subsection{Unfolding of $B_n$}

Write $\widehat{W}=B_n$ with generating set $R=\lbrace r_1,\ldots,r_n\rbrace$. The Coxeter relations on $B_n$ are given by 
$$(r_ir_{i+1})^3= (r_{n-1}r_n)^4 =e\quad \text{for $1\leq i<n-1$}\quad\text{and}\quad (r_ir_j)^2=e\quad\text{otherwise.}$$

Let $\widehat{J}=R\setminus\lbrace r_1\rbrace$. Then the parabolic subgroup $\widehat{W}_{\widehat{J}}$ of $\widehat{W}=B_n$ generated by $\widehat{J}$ is isomorphic to a Coxeter group of type $B_{n-1}$.  We write $\widehat{W}_{\widehat{J}}=B_{n-1}$.  The next lemma gives an explicit description of $\widehat{W}^{\widehat{J}}$.  We leave the proof as a simple exercise.

%The set of minimal coset representatives $$\tilde{W}^{\tilde{J}}=\{w\in %B_n\ |\ wr_1>w\}.$$ 

%The elements in $B_{n-1}$ can be unfolded as elements in $A_{2n-3}$, $A_{2n-2}$, or $D_n$ in accordance to Table \ref{DescriptionFiniteFoldingTable}.

%will be the set of words $u$ in $\tilde{W}$ such that no reduced word %for $u$ ends in an element of $\tilde{J}$, or equivalently,  the set of %words $u$ in $\tilde{W}$ such that every reduced word for $u$ ends in %$r_1$.\\

\begin{lemma}\label{lem:ElementsQuotientTypeB}
    Let $u\in \widehat{W}^{\widehat{J}}$. If $u\neq e$, then $u$ is one of the following: 
    \begin{itemize}
        \item $ r_ir_{i-1}\cdots r_2r_1$, for some $1\leq i\leq n$
        \item $r_ir_{i+1}\cdots r_{n-1}r_nr_{n-1}\cdots r_2r_1$, for some $1\leq i<n$.\\
        
    \end{itemize}
    
   % $ r_ir_{i-1}\cdots r_3r_2r_1$, $i\in [n]$, or (3) $r_ir_{i+1}\cdots %r_{n-1}r_nr_{n-1}\cdots r_3r_2r_1$, $i\in [n-1]$.
\end{lemma}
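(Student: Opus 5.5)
The plan is to combine a counting argument with an explicit check that each listed word is a minimal coset representative. By Proposition \ref{prop:parabolic_factorization}, or just Lagrange's theorem, we have $|\widehat{W}^{\widehat{J}}| = |B_n|/|B_{n-1}| = 2^n n!/(2^{n-1}(n-1)!) = 2n$. The list in the lemma contains the identity, $n$ words of the first kind and $n-1$ words of the second kind, for a total of $2n$. So it suffices to show that the $2n$ listed elements are distinct and that each lies in $\widehat{W}^{\widehat{J}}$. Equivalently, each nonidentity listed word $u$ should be reduced and have $r_1$ as its unique right descent.

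To do this concretely, I would use the signed permutation model of $B_n$. Adapted to the paper's labeling, where $r_n$ is the generator attached to the edge labelled $4$ and $r_1$ is the end of the string, this means $r_i = (i\ i+1)$ for $1 \le i < n$, and $r_n$ changes the sign of the entry in position $n$. Then $\widehat{W}_{\widehat{J}}$ is the stabilizer of the value at position $1$, viewing $w$ acting on the right. So the cosets $\widehat{W}/\widehat{W}_{\widehat{J}}$ correspond to the $2n$ possible values $\pm j$ of $w(1)$, or of $w^{-1}(1)$ depending on convention.

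I would then compute where each word sends $1$. The word $r_i r_{i-1}\cdots r_1$ is the cycle carrying $1$ to $i+1$ for $i<n$, and for $i=n$ it introduces the sign. The word $r_i\cdots r_{n-1} r_n r_{n-1}\cdots r_1$ sends $1$ to $-(i)$ or a similar negative value. Together these hit all $2n$ signed values exactly once, which gives distinctness.

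For minimality, the cleanest route is to note that the length of each word equals the length formula for signed permutations, namely the inversion count plus the negative-sum-pair count. Alternatively, one can observe that the word is the shortest path in the Coxeter graph from its first letter to $r_1$ through the "unfolded" string $r_1,\dots,r_n,\dots,r_1$. In either case the word length strictly increases along the list ($1, 2, \dots, n, n+1, \dots, 2n-1$), matching the Poincaré polynomial of the quotient, $\sum_{u} q^{\ell(u)} = [2n]_q$. This follows from Proposition \ref{prop:parabolic_factorization} together with $|B_n|(q) = \prod [2k]_q$.

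Since the quotient has exactly one element of each length $0,\dots,2n-1$, and each listed word is reduced of the corresponding length with all words representing distinct cosets, the list is complete. The main obstacle is verifying reducedness, and the fact that $r_1$ is the only right descent, for the second family. I would handle this by the signed-permutation length formula, or by a short induction on $n-i$ using the exchange condition. The nontrivial input there is the braid relation $(r_{n-1}r_n)^4=e$, which prevents cancellation across $r_n$.
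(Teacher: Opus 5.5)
The paper gives no proof of this lemma; it is left as an exercise. The natural direct argument would be an induction on length. If $u\in\widehat W^{\widehat J}$ and $\ell(r_ju)<\ell(u)$, then $r_ju\in\widehat W^{\widehat J}$. So every element of length $k+1$ has the form $r_ju'$ with $u'$ a listed element of length $k$. One then checks with the commutation and braid relations, including $(r_{n-1}r_n)^4=e$, that exactly one choice of $r_j$ avoids creating a right descent in $\widehat J$. That route needs only the Coxeter relations but requires a case check. Your route instead uses counting: $|\widehat W^{\widehat J}|=2n$ and $\sum_{u\in\widehat W^{\widehat J}}q^{\ell(u)}=[2n]_q$. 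It combines this with the signed-permutation computation that the listed words send $1$ to $1,2,\dots,n,-n,-(n-1),\dots,-1$, so they lie in the $2n$ distinct left cosets. This trades the case analysis for standard facts about $B_n$. Those ingredients are correct, and they already finish the proof.

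The step you call the main obstacle is in fact automatic. Let $u_k$ be the listed word of word length $k$ ($0\le k\le 2n-1$), and let $m_k$ be the minimal representative of its coset. By distinctness, $\{m_k\}=\widehat W^{\widehat J}$. Then $\ell(m_k)\le\ell(u_k)\le k$ for each $k$. On the other hand, $\sum_k\ell(m_k)=0+1+\cdots+(2n-1)=\sum_k k$ by the $[2n]_q$ computation. Hence $\ell(m_k)=\ell(u_k)=k$ for all $k$, and uniqueness of the minimal-length element in a coset gives $u_k=m_k$.

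This matters because the verification you proposed has a slip. The formula ``inversions plus negative-sum pairs'' is the Bj\"orner--Brenti length formula for the model in which the sign change acts on position $1$. In your model, where $r_n$ negates position $n$, it gives wrong lengths. For instance, $r_i\cdots r_{n-1}r_nr_{n-1}\cdots r_1=[-i,1,\dots,i-1,i+1,\dots,n]$ is assigned length $i$ instead of $2n-i$. You would have to relabel ($r_i=(n-i\;\;n-i+1)$ for $i<n$, and $r_n$ negating position $1$) before using it. Also, the ``shortest path in the Coxeter graph'' remark is not an argument for reducedness. With the sum comparison above you need neither.
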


\begin{proof}[Proof of Thoerem \ref{thm:MainTheorem} parts (1)-(3)]

%In the next three sections, we describe the unfolding of $\tilde W^{\tilde J}$ in each %of the cases of $W=A_{2n-1}, A_{2n}$, and $D_{n+1}$.

%\subsubsubsection{The unfoldings of $B_n$ in $A_{2n-1}$ and $A_{2n}$}

For part (1), let $W=A_{2n-1}$ with generating set $S=\lbrace s_1,\ldots,s_{2n-1}\rbrace$ and consider the folded embedding $\phi:B_n\rightarrow A_{2n-1}$ given by $$\phi(r_i)=s_is_{2n-i}\ \text{for $i<n$}\quad\text{and}\quad  \phi(r_n)=s_n.$$ Since $\pi^{-1}(r_1)=\{s_1,s_{2n-1}\}$, we let $J=S\setminus\lbrace s_1,s_{2n-1}\rbrace$ as in Proposition \ref{prop:folding_parabolic}.  The parabolic subgroup $W_{J}$ is isomorphic to $A_{2n-3}$ and Corollary \ref{cor:ParabolicFactorizationFolding} says that
\begin{equation*}
    U_{B_n}^{A_{2n-1}}(q)=U_{\widehat{W}^{\widehat{J}}}^{A_{2n-1}}(q)\cdot U_{B_{n-1}}^{A_{2n-3}}(q).
    %\label{UnfoldingFactorizationAfromBodd}
\end{equation*}
Lemma \ref{lem:ElementsQuotientTypeB} implies
$$U_{\widehat{W}^{\widehat{J}}}^{A_{2n-1}}(q)=\displaystyle\sum_{i=0}^{n-1}q^{2i}+\sum_{i=n+1}^{2n}q^{2i-3}=[2n-1]_{-q}[2n]_q.$$
By induction, we have
\begin{equation*}
    U_{B_n}^{A_{2n-1}}(q)=[2]_q[3]_{-q}[4]_q[5]_{-q}\cdots [2n-1]_{-q}[2n]_q%\label{MainTheoremTypeAodd}
\end{equation*}
which proves part (1).

%generated by $J$ is isomorphic to a Coxeter group of type $, and write %$W_J=A_{2n-3}$. 

%The system of minimal coset representatives $W^{J}$ will be the set of words $w$ in $A_{2n-%}$ such that no reduced word for $w$ ends in an element of $J$, or equivalently,  the set %of words $w$ in $A_{2n-1}$ such that every reduced word for $w$ ends in $s_1$ or $s_{2n-%1}$.\\

%For $u\in\tilde{W}^{\tilde{J}}$ as in Lemma \ref{lem:ElementsQuotientTypeB}, then %$\phi(u)\in W^{J}$ and by 

%\subsection{The unfolding of $B_n$ to $A_{2n}$}

For part (2), let $W=A_{2n}$ and consider the folded embedding $\phi:B_n\rightarrow A_{2n}$ given by 
$$\phi(r_i)=s_is_{2n-i+1}\ \text{for $i<n$}\quad\text{and}\quad  \phi(r_n)=s_ns_{n+1}s_n.$$
Since $\pi^{-1}(r_1)=\{s_1,s_{2n}\}$, we let $J=S\setminus\lbrace s_1,s_{2n}\rbrace$ as in Proposition \ref{prop:folding_parabolic}.  The parabolic subgroup $W_{J}$ is isomorphic to $A_{2n-2}$ and Corollary \ref{cor:ParabolicFactorizationFolding} implies 
  \begin{equation*}
      U_{B_n}^{A_{2n}}(q)=U_{\widehat{W}^{\widehat{J}}}^{A_{2n}}(q)\cdot U_{B_{n-1}}^{A_{2n-2}}(q).
    %\label{UnfoldingFactorizationAfromBeven}
  \end{equation*}
  Using Lemma \ref{lem:ElementsQuotientTypeB}, we get
  $$U_{\widehat{W}^{\widehat{J}}}^{A_{2n}}(q)=\displaystyle\sum_{i=0}^{n-1}q^{2i}+\sum_{i=n}^{2n-1}q^{2i+1}=[2n]_{q}[2n+1]_{-q}.$$
  Again by induction, we have 
$$U_{B_n}^{A_{2n}}(q)=[2]_q[3]_{-q}[4]_q[5]_{-q}\cdots [2n-1]_{-q}[2n]_q[2n+1]_{-q}.$$
This completes the proof of part (2).  
%Take $J=S\setminus\lbrace s_1,s_{2n}\rbrace$. Then the parabolic subgroup $W_{J}$ of $W$ generated by $J$ is isomorphic to a Coxeter group of type $A_{2n-2}$ and write $W_J=A_{2n-2}$. The system of minimal coset %representatives $W^{J}$ will be the set of words $w$ in $A_{2n}$ such that no reduced word for $w$ ends in an element of $J$, or equivalently,  the set of words $w$ in $A_{2n}$ such that every reduced word for $w$ ends in %$s_1$ or $s_{2n}$.\\

 % For $u\in\tilde{W}^{\tilde{J}}$ as in Lemma \ref{lem:ElementsQuotientTypeB}, then $\phi(u)\in W^{J}$ and by Corollary \ref{cor:ParabolicFactorizationFolding}
%  \begin{equation*}
%      U_{B_n}^{A_{2n}}(q)=U_{\tilde{W}^{\tilde{J}}}^{A_{2n}}(q)U_{B_{n-1}}^{A_{2n-2}}(q)
%    %\label{UnfoldingFactorizationAfromBeven}
%  \end{equation*}

%where $$U_{\tilde{W}^{\tilde{J}}}^{A_{2n}}(q)=\displaystyle\sum_{i=0}^{n-1}q^{2i}+\sum_{i=n}^{2n-1}q^{2i+1}=[2n]_{q}[2n+1]_{-q}$$
%and by induction
%$$U_{B_n}^{A_{2n}}(q)=[2]_q[3]_{-q}[4]_q[5]_{-q}\cdots [2n-1]_{-q}[2n]_q[2n+1]_{-q}$$

%From the polynomials $U_{B_n}^{A_{2n-1}}(q)$ and $U_{B_n}^{A_{2n}}(q)$, we have\\

%\begin{cor} Consider the folding $A_k$ to $B_{\lfloor(k+1)/2\rfloor}$. Then
%\begin{eqnarray*}
%    \sum_{(v,w)\in B_{\lfloor(k+1)/2\rfloor}\times B_{\lfloor(k+1)/2\rfloor}}(-1)^{\ell_B(w)}q^{\ell_B(w)+\ell_A(\phi(v))} & = & \sum_{(x,y)\in A_{k}\times B_{\lfloor(k+1)/2\rfloor}}(-1)^{\ell_A(x)}q^{\ell_A(x)+\ell_B(y)}
%\end{eqnarray*}
%    \label{CorollaryMainTheoremTypeA}
%\end{cor}
    
%\subsubsubsection{The unfolding of $B_n$ in $D_{n+1}$}

For part (3) of Theorem \ref{thm:MainTheorem}, write $W=D_{n+1}$ with generators $S=\lbrace s_1,\ldots,s_{n+1}\rbrace$ and consider the folded embedding $\phi:B_n\rightarrow D_{n+1}$ given by 
$$\phi(r_i)=s_i\ \text{for $i<n$}\quad\text{and}\quad  \phi(r_n)=s_ns_{n+1}.$$  First note that if $n=2$, then the folded embedding $\phi:B_2\rightarrow D_3$ is equivalent to $\phi:B_2\rightarrow A_3$ as in part (1) and hence
$$U_{B_2}^{D_3}(q)=U_{B_2}^{A_3}(q)=[2]_q[3]_{-q}[4].$$
Now assume $n\geq 3$.  Since $\pi^{-1}(r_1)=\{s_1\}$, we let $J=S\setminus\lbrace s_1\rbrace$ as in Proposition \ref{prop:folding_parabolic}. Then the subgroup $W_{J}$ of $D_{n+1}$ generated by $J$ is isomorphic to $D_{n}$ and write $W_J=D_{n}$.  Corollary \ref{cor:ParabolicFactorizationFolding} says that
\begin{equation*}
    U_{B_n}^{D_{n+1}}(q)=U_{\widehat{W}^{\widehat{J}}}^{D_{n+1}}(q)\cdot U_{B_{n-1}}^{D_n}(q)
    %\label{UnfoldingFactorizationD}
\end{equation*}
and Lemma \ref{lem:ElementsQuotientTypeB} yields
$$U_{\widehat{W}^{\widehat{J}}}^{D_{n+1}}(q)=\displaystyle\sum_{\substack{0\leq i\leq 2n \\ i\neq n}}q^{i}=[2n+1]_{q}-q^n.$$
By induction, we have
\begin{equation*}
    U_{B_n}^{D_{n+1}}(q)=[2]_q[3]_{-q}[4]\prod_{k=3}^{n}\left([2k+1]_q-q^k\right).
    %\label{MainTheoremTypeD}
\end{equation*}

%The system of minimal coset representatives $W^{J}$ will be the set of words $w$ in $D_{n+1}$ such that no reduced word for $w$ ends in an element of $J$, or equivalently,  the set of words $w$ in $D_{n+1}$ such that every %reduced word for $w$ ends in $s_1$.\\

%For $u\in\tilde{W}^{\tilde{J}}$ as in Lemma \ref{LemmaElementsQuotientTypeB}, then $\phi(u)\in W^{J}$ and by Corollary \ref{ParabolicFactorizationFolding}

\end{proof}

With the formulas for $U_{B_n}^{A_{2n-1}}(q)$ and $U_{B_n}^{A_{2n}}(q)$, we can prove Corollary \ref{cor:length_identity}.  Note that for the full rank function on the groups $A_n$ and $B_n$, we have 
$$A_n(q):=\sum_{w\in A_n} q^{\ell_A(w)}=[1]_q[2]_q\cdots [n]_q\quad\text{and}
\quad B_n(q):=\sum_{w\in B_n} q^{\ell_B(w)}=[2]_q[4]_q\cdots [2n]_q.$$
Corollary \ref{cor:length_identity} follows from the identities
$$B_n(-q)\cdot U_{B_n}^{A_{2n-1}}(q) = A_{2n-1}(-q)\cdot B_n(q)\quad\text{and}
\quad B_n(-q)\cdot U_{B_n}^{A_{2n}}(q) = A_{2n}(-q)\cdot B_n(q).$$

\subsection{Lusztig foldings}

The dihedral group $\widehat W=I_2(n+1)$ is generated by $R=\lbrace r_1,r_2\rbrace$ subject to the Coxeter relations $$r_1^2=r_2^2=(r_1r_2)^{n+1}=e.$$  Consider the embedding $\phi:I_2(n+1)\to A_n$ given by 
$$\displaystyle\phi(r_1)=\prod_{\substack{s_i\in S\\i\text{ odd}}}s_{i}\quad\text{and}\quad
    \displaystyle\phi(r_2)=\prod_{\substack{s_i\in S\\i\text{ even}}}s_i$$ 
    where $S=\{s_1,\ldots,s_n\}$ denote the generators of $A_n$.
\begin{proof}[Proof of Thoerem \ref{thm:MainTheorem} parts (4) and (5)]
Let $\widehat{J}=\lbrace r_2\rbrace$.  Then the parabolic subgroup $\widehat{W}_{\widehat{J}}=\{e,r_2\}$ and the set of minimal length coset representatives 
$\widehat{W}^{\widehat{J}}$ consists of the identity element and all expressions $\underset{j}{\underbrace{\ldots r_2r_1r_2r_1}}$ for some $1\leq j\leq n$.  In particular, $\widehat{W}^{\widehat{J}}$ contains a unique element of length $j$ for every $0\leq j\leq n$. 
 Let $J=\lbrace s_i\ |\ i\text{ is even}\rbrace$.  Then by Corollary \ref{cor:ParabolicFactorizationFolding},
  \begin{equation*}
      U_{I_2(n+1)}^{A_n}(q)=U_{\widehat{W}^{\widehat{J}}}^{A_{n}}(q)\cdot U_{\widehat W_{\widehat J}}^{W_J}(q).
    %\label{UnfoldingFactorizationAfromI}
  \end{equation*}
Suppose $n$ is even and write $n=2m$. Then $\ell_A(\phi(r_1))=\ell_A(\phi(r_2))=m$ and hence
$$U_{\tilde W_{\tilde J}}^{W_J}(q)=1+q^m=[2]_{q^m}$$
and 
$$U_{\widehat{W}^{\widehat{J}}}^{A_{n}}(q)=1+q^m+q^{2m}+\cdots + q^{nm}=[n+1]_{q^m}.$$
Otherwise, suppose $n$ is odd and write $n=2m-1$.  Then $\ell_A(\phi(r_1))=m$ and $\ell_A(\phi(r_2))=m-1$.  This implies 
$$U_{\widehat W_{\widehat J}}^{W_J}(q)=1+q^{m-1}=[2]_{q^{m-1}}$$
and 
$$U_{\widehat{W}^{\widehat{J}}}^{A_{n}}(q)=(1+q^{m})(1+q^{n}+q^{2n}+\cdots+q^{(m-1)n})=[2]_{q^m}\cdot[m]_{q^n}.$$
These calculations imply parts (4) and part (5) of Theorem \ref{thm:MainTheorem} which completes the proof.
\end{proof}

\section{Coxeter foldings of affine type}\label{S:Affine_foldings}

In this section we prove Theorems \ref{thm:affine_type_A}, \ref{thm:affine_type_B}, and \ref{thm:affine_type_C} on the unfolding series for irreducible Coxeter groups of affine type.  Unlike the finite case, we do not apply Corollary \ref{cor:ParabolicFactorizationFolding} in our calculations (although  Corollary \ref{cor:ParabolicFactorizationFolding} holds for affine foldings).  We first show that Theorem \ref{thm:affine_type_A} follows from a direct calculation based on the Poincar\'{e} series of affine type A.  For Theorems \ref{thm:affine_type_B} and \ref{thm:affine_type_C} we apply a certain distribution formula described by Reiner in \cite{reiner1995distribution} (see Proposition \ref{prop:Reiner}).

%Though Corollary \ref{cor:ParabolicFactorizationFolding} does not require that the Coxeter group and its respective folding subgroup are finite, its application on affine Coxeter groups is very challenging. For the finite case, the choice of the subset $\tilde J$ of $R$ produced a set of minimal coset representatives $\tilde W^{\tilde J}$ following an easily recognizable pattern. So, the polynomials $U_{\tilde{W}^{\tilde{J}}}^W(q)$ and $ U_{\tilde{W}_{\tilde{J}}}^{W_J}(q)$ can both be computed as shown in Lemma \ref{lem:ElementsQuotientTypeB} and Lemma \ref{LemmaElementsQuotientTypeI}. However, for affine Coxeter groups, the elements of such a set of minimal coset representatives won't follow a recognizable pattern for any subset $\tilde J$.\\

%The computation of the unfolding series for each embedding presented in Table \ref{DescriptionAffineFoldingTable1} and Table \ref{DescriptionAffineFoldingTable2} follows as a specialization of the Poincar\'e series of $\tilde A_{n-1}$ and the Reiner's distribution functions $\tilde B_n(a,q)$ and $\tilde C_n(a,b,q)$ described in \cite{reiner1995distribution}. 

\subsection{The unfolding of $\widetilde A_{n-1}$}
For any $m\geq 1$, we consider the embedding $\phi:\widetilde A_{n-1}\rightarrow \widetilde{A}_{mn-1}$ given by $$\phi(r_i)=\prod_{j=0}^{m-1} s_{i+jn}$$
where $R=\{r_0,\ldots, r_{n-1}\}$ and $S=\{s_0,\ldots, s_{mn-1}\}$ denote the generators of $A_{n-1}$ and $\widetilde{A}_{mn-1}$ respectively (see Table \ref{tab:DescriptionAffineFoldingTableTypeA}).  We also let $\ell$ and $\ell_{\widetilde A}$ represent the length functions on $\widetilde A_{kn-1}$ and $\widetilde A_{n-1}$, respectively. In \cite{Bott1979}, Bott shows that the Poincar\'e series of the group $\widetilde A_{n-1}$ is given by
\begin{equation}
    \widetilde A_{n-1}(q)=\sum_{w\in\widetilde A_{n-1}}q^{\ell_{\widetilde A}(w)}=\displaystyle\prod_{k=1}^n\frac{[k]_{q}}{1-q^{(k-1)}}
    \label{PoincareSeriesAffineA}
\end{equation}

Note that $\ell(\phi(r))=m\ell_{\widetilde A}(r)$ for any $r\in R$ and therefore Lemma \ref{lem:folding_length} implies $\ell(\phi(w))=m\ell_{\widetilde A_{n-1}}(w)$ for any $w\in\widetilde A_{n-1}$. Substituting $q\mapsto q^m$ in Equation \eqref{PoincareSeriesAffineA} gives
$$U_{\widetilde A_{n-1}}^{A_{mn-1}}(q)= \sum_{w\in\widetilde A_{n-1}}q^{\ell(\phi(w))} = \sum_{w\in\widetilde A_{n-1}}q^{m\ell_{\widetilde A}(w)}=\widetilde A_{n-1}(q^m)=\displaystyle\prod_{k=1}^n\frac{[k]_{q^m}}{1-q^{(k-1)m}}.$$
This proves Theorem \ref{thm:affine_type_A}.

\subsection{The unfoldings of $\widetilde B_n$ and $\widetilde C_n$}
In this section we consider the unfoldings of affine Coxeter groups $\widetilde B_n$ and $\widetilde C_n$ in parts (ii) and (iii) of Table \ref{AffineFoldingTable} (see also Tables \ref{DescriptionAffineFoldingTable1} and \ref{DescriptionAffineFoldingTable2}). The Coxeter graphs of $\widetilde B_n$ and $\widetilde C_n$ are given by: 
$$\widetilde{B}_n:\quad \begin{tikzpicture}[
      thick,
      acteur/.style={
        circle,
        fill=black,
        thick,
        inner sep=2pt,
        minimum size=0.2cm        
      },
      baseline={(0,1.85)}    
    ] 
      \node (a1) at (1,2) [acteur,label=below:$r_{n-2}$]{};
      \node (a2) at (0,2)[acteur]{}; 
      \node (a3) at (-1,2) [acteur,label=below:$r_1$]{}; 
      \node (a4) at (-2,2) [acteur,label=below:$r_0$]{}; 
      \node (a5) at (2,2.5) [acteur,label=above:$r_{n-1}$]{}; 
      \node (a6) at (2,1.5) [acteur,label=below:$r_{n}$]{};
      
      \draw (a1) -- (a2); 
      \draw[dashed] (a2) -- (a3); 
      \draw (a3) -- node[midway, above, sloped] {4} (a4);
      \draw (a1) -- (a5);
      \draw (a1) -- (a6);

      %\draw[blue, <->] (a5)--(a6);
%     \draw (a7) -- (a5);
%     \draw[red] (a9) -- (a5);
%     \draw[red] (a9) -- (a4);
%      \draw[red] (a5) -- (a3);
%      \draw[red] (a6) -- (a2);

    \end{tikzpicture}
    \qquad\qquad  \widetilde{C}_n:\quad 
\begin{tikzpicture}[
      thick,
      acteur/.style={
        circle,
        fill=black,
        thick,
        inner sep=2pt,
        minimum size=0.2cm
      },
      baseline={(0,1.85)} 
    ]       
\node (c1) at (7.25,2) [acteur,label=below:$r_{n-1}$]{};
      \node (c2) at (6.5,2)[acteur]{}; 
      \node (c3) at (5.75,2) [acteur,label=below:$r_1$]{}; 
      \node (c4) at (5,2) [acteur,label=below:$r_0$]{}; 
      \node (c5) at (8,2) [acteur,label=below:$r_n$]{}; 
%      \node (a6) at (2.5,0.5) [acteur,label=below:$s_{n+1}$]{};
%      \node (a7) at (1,0.5) [acteur]{}; 
%      \node (a8) at (-0.5,0.5) [acteur,label=below:$s_{2n-2}$]{}; 
%      \node (a9) at (-2,0.5) [acteur,label=below:$s_{2n-1}$]{}; 
      \draw (c1) -- (c2); 
      \draw[dashed] (c2) -- (c3); 
      \draw (c3) --node[midway, above, sloped] {4} (c4);
      \draw (c1) -- node[midway, above, sloped] {4} (c5);
    \end{tikzpicture}$$
Since the generators $r_0$ (resp. $r_0, r_n$) in type $\widetilde B_n$ (resp. $\widetilde C_n$) only involve braid relations of even length, the number of times these generators appears in a reduced word of a given element is unique.  For $w\in\widetilde B_n$, let $r_B(w)$ denote then number of times $r_0$ appears in a reduced word of $w$.  Similarly, for $w\in\widetilde C_n$, let $r_C(w)$ and $s_C(w)$ denote the number of times $r_0$ and $r_n$ appear in a reduced word of $w$ respectively.  The following formulas appear in \cite{reiner1995distribution} where Reiner studies $q$-Eulerian distribution functions on Coxeter groups.

\begin{proposition}[Reiner, 1992]\label{prop:Reiner}
Denote the $q$-Pochhammer symbol by
$$
(x;q)_n:=\prod_{k=0}^{n-1}(1-q^kx).
$$
Then the following formulas hold:
\begin{equation}
    \widetilde B_n(a,q):=\sum_{w\in\widetilde B_n}a^{r_B(w)}q^{\ell_{\widetilde B}(w)}=\dfrac{(-aq;q)_n(-q;q)_{n-1}[n]_q!}{(aq^n;q)_n}
    \label{PoincareSeriesAffineB}
\end{equation}
\item \begin{equation}
    \widetilde C_n(a,b,q):=\sum_{w\in\widetilde C_n}a^{r_C(w)}b^{s_C(w)}q^{\ell_{\widetilde C}(w)}=\dfrac{(-aq;q)_n(-bq;q)_{n}[n]_q!}{(abq^{n+1};q)_n}
    \label{PoincareSeriesAffineC}
\end{equation}
\end{proposition}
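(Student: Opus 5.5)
The plan is to interpret each statistic as an inversion count and reduce to a Bott–Macdonald type computation. The generator $r_0$ in $\widetilde B_n$, and each of $r_0$ and $r_n$ in $\widetilde C_n$, is joined to the rest of the Coxeter graph only by edges with even labels. So each forms a conjugacy class of simple reflections on its own. For a reduced word $w=t_1\cdots t_\ell$, the inversion reflections $t_1\cdots t_{j-1}t_jt_{j-1}\cdots t_1$ are distinct, and the $j$-th one is conjugate to $t_j$. Hence $r_B(w)$ equals the number of inversions of $w$ lying in the conjugacy class of $r_0$, and similarly for $r_C(w)$ and $s_C(w)$. Geometrically, these count the affine hyperplanes of a fixed $W$-orbit that separate the alcove of $w$ from the fundamental alcove. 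The two-variable sums in \eqref{PoincareSeriesAffineB} and \eqref{PoincareSeriesAffineC} are therefore multi-parameter Poincaré series in the sense of Macdonald, with weights $t_\alpha$ depending only on the orbit of the affine hyperplane.

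The first step is to realize the group as $\widetilde W = Q^\vee \rtimes W_0$, with $W_0$ the finite group obtained by deleting a suitable special node. For $\widetilde C_n$ this means deleting $r_0$ or $r_n$ to get $B_n$; for $\widetilde B_n$, deleting $r_0$ gives $D_n$, but it is more convenient to delete $r_n$ and get $B_n$. Writing each alcove as (chamber) $\times$ (translate), the separating hyperplanes split into those through the origin and those of the form $H_{\alpha,k}$ with $k\neq 0$. This gives the Bott–Macdonald formula
\[
\widetilde W(t)=W_0(t)\cdot\sum_{\lambda}\prod_{\alpha>0} t_\alpha^{\,|\langle\lambda,\alpha\rangle|-\epsilon},
\]
summed over coweights, with the usual $\epsilon$ correction depending on chambers. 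Macdonald evaluates this as
\[
W_0(t)\prod_{\alpha>0}\frac{1-t_\alpha\, t^{\mathrm{ht}(\alpha)}}{1-t^{\mathrm{ht}(\alpha)}},
\]
or equivalently via the degrees. I would quote this evaluation rather than reprove it.

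Next I specialize the weights: $t_\alpha=q$ on orbits not containing $r_0$ (resp. $r_0,r_n$), and $t_\alpha=aq$ (resp. $aq$, $bq$) on the special orbits. I then evaluate the finite factor $W_0(t)$ as a product. For $B_n$ with short/long weights this is the standard $\prod_k [k]_q(1+ t_{\mathrm{short}} q^{k-1})$-type factorization. This should yield the numerator pieces $(-aq;q)_n$, $(-q;q)_{n-1}$ or $(-bq;q)_n$, together with $[n]_q!$. The product over positive roots should collapse, after grouping roots by height and length, to the denominators $(aq^n;q)_n$ and $(abq^{n+1};q)_n$.

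The main obstacle is this final bookkeeping. I need to identify exactly which affine hyperplane orbits correspond to $r_0$ (and $r_n$) in each type, since in $\widetilde C_n$ the two special classes split the long affine roots by parity of $k$. I then have to check that the root-height product telescopes to the stated Pochhammer quotients. I would first verify the formulas for $n=2$ by direct enumeration, or against the known single-variable specialization $a=b=1$. I would then do the general telescoping by pairing roots $\varepsilon_i\pm\varepsilon_j$ of complementary heights.
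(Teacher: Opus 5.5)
The paper does not prove this proposition; it quotes it from Reiner. Your route is a different one, and its set-up is sound. $r_B,r_C,s_C$ count inversions in a single conjugacy class, and the weight $a^{r}b^{s}q^{\ell}$ is multiplicative along $w=w^{J}w_{J}$. The finite factor is $(-aq;q)_n[n]_q!$ for $\widetilde B_n$ with $W_0=\langle r_0,\dots,r_{n-1}\rangle$, resp.\ $(-bq;q)_n[n]_q!$ for $\widetilde C_n$ with $W_0=\langle r_1,\dots,r_n\rangle$.

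The gap is in the step that carries all the content. The evaluation you plan to quote, $W_0(t)\prod_{\alpha>0}\frac{1-t_\alpha t^{\mathrm{ht}(\alpha)}}{1-t^{\mathrm{ht}(\alpha)}}$, is not Macdonald's affine formula. At equal parameters the product is the Kostant--Macdonald product for $W_0(t)$ itself, so you would get $W_0(t)^2$, a polynomial. For $\widetilde A_1$ this gives $(1+t)^2$ instead of $(1+t)/(1-t)$; Bott's formula is $W_0(t)\prod_i(1-t^{e_i})^{-1}$. Also, the translation sum runs over the coroot lattice $Q^\vee$, not over coweights, which would overcount. That is the even-sum sublattice of $\Z^n$ for $\widetilde B_n$ and all of $\Z^n$ for $\widetilde C_n$.

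What actually has to be proved are two lattice-sum identities. Put $g(m)=m-1$ for $m\ge1$, $g(m)=-m$ for $m\le0$, and $G(\lambda)=\sum_{i<j}\bigl(g(\lambda_i-\lambda_j)+g(\lambda_i+\lambda_j)\bigr)$. For $\widetilde B_n$ you need
\[
\sum_{\substack{\lambda\in\Z^n\\ \lambda_1+\cdots+\lambda_n\ \mathrm{even}}}(aq)^{\sum_i g(\lambda_i)}\,q^{G(\lambda)}=\frac{(-q;q)_{n-1}}{(aq^n;q)_n},
\]
and for $\widetilde C_n$ you need
\[
\sum_{\lambda\in\Z^n}\Bigl(\prod_{i=1}^n c(\lambda_i)\Bigr)q^{G(\lambda)}=\frac{(-aq;q)_n}{(abq^{n+1};q)_n},\qquad c(m)=\begin{cases}(aq)^m(bq)^{m-1}& m\ge1,\\ (abq^2)^{-m}& m\le0.\end{cases}
\]
Two points in your bookkeeping are off here. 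First, the translation sum must itself produce a numerator Pochhammer ($(-q;q)_{n-1}$, resp.\ $(-aq;q)_n$), not just the denominator, because $W_0(t)$ supplies only one such factor. Second, for $\widetilde C_n$ the summand is not of the form $\prod_\alpha t_\alpha^{|\langle\lambda,\alpha\rangle|-\epsilon}$. With $A_0=\{\tfrac12>x_1>\cdots>x_n>0\}$, the walls $x_i\in\tfrac12\Z$ alternate between weights $aq$ and $bq$, and this is what produces $c(m)$.

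You identify this parity split as the main obstacle, but resolving it is the proof. Checking $n=2$ and hoping the root product telescopes does not establish these identities for all $n$. To close the gap you have two options. One is to invoke a correctly stated multi-parameter version of Macdonald's theorem, with parameters attached to $W$-orbits of affine roots, and carry out the specialization. The other is to evaluate the sums directly, for instance via $1/\widetilde W(t)=\sum_{J\subsetneq S}(-1)^{|S|-|J|-1}/W_J(t)$ combined with the finite multi-parameter Poincar\'e polynomials. This identity holds because every proper parabolic subgroup is finite and no element has every generator as a descent.
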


%Following Reiner's work, consider the Coxeter group $\widetilde B_n$. Let %$r(w)$ be the number of times the generator $r_0$ appears in any reduced %word for $w$, and let $\ell_{\widetilde B}(w)$ be its respective length. In \cite{reiner1995distribution}, Reiner shows that
%\begin{equation}
%    \widetilde B_n(a,q)=\sum_{w\in\widetilde %B_n}a^{r(w)}q^{\ell_{\widetilde B}(w)}=\dfrac{(-aq;q)_n(-q;q)_{n-1}[n]_q!}%{(aq^n;q)_n}
%    \label{PoincareSeriesAffineB}
%\end{equation}

We prove Theorems \ref{thm:affine_type_B} and \ref{thm:affine_type_C} by showing that each unfolding series is a specialization of the formulas in Proposition \ref{prop:Reiner}.

\begin{proof}[Proof of Theorem \ref{thm:affine_type_B}]

Theorem \ref{thm:affine_type_B} focuses on the three foldings of $\widetilde{B}_n$ in part (ii) of Table \ref{tab:affinegroupstable} which are explicitly described in Table \ref{DescriptionAffineFoldingTable1}.  We first consider the embedding $\phi:\widetilde B_n\to\widetilde D_{n+1}$ from part 1.  In this case, we can see that $\ell_{\widetilde B}(\phi(r_0))=2$ and $\ell_{\widetilde B}(\phi(r_i))=1$ for $i>0$. Lemma \ref{lem:folding_length} implies $\ell_{\widetilde D}(\phi(w))=\ell_{\widetilde B}(w)+r(w)$ for all $w\in\widetilde B_n$.
Substituting $a\mapsto q$ in Equation \eqref{PoincareSeriesAffineB} yields:
$$U_{\widetilde B_n}^{\widetilde D_{n+1}}(q)=\sum_{w\in\widetilde B_n}q^{\ell_{\widetilde D}(\phi(w))}=\sum_{w\in\widetilde B_n}q^{r(w)}q^{\ell_{\widetilde B}(w)}=\widetilde B_n(q,q)=\dfrac{(-q^2;q)_n(-q;q)_{n-1}[n]_q!}{(q^{n+1};q)_n}.$$

For part 2, we have embedding $\phi:\widetilde B_n\to\widetilde D_{2n}$. Here we have $\ell_{\widetilde B}(\phi(r_0))=1$ and $\ell_{\widetilde B}(\phi(r_i))=2$ for $i>0$.
Lemma \ref{lem:folding_length} implies $\ell_{\widetilde D}(\phi(w))=2\ell_{\widetilde B}(w)-r(w)$. Substituting $a\mapsto q^{-1}$ and $q\mapsto q^2$ in Equation \eqref{PoincareSeriesAffineB} gives
$$U_{\widetilde B_n}^{\widetilde D_{2n}}(q)=\sum_{w\in\widetilde B_n}q^{\ell_{\widetilde D}(\phi(w))}=\sum_{w\in\widetilde B_n}q^{-r(w)}q^{2\ell_{\widetilde B}(w)}=\widetilde B_n(q^{-1},q^2)=\dfrac{(-q;q^2)_n(-q^2;q^2)_{n-1}[n]_{q^2}!}{(q^{2n-1};q^2)_n}.$$

For part 3, we have the embedding $\phi:\widetilde B_n\to\widetilde D_{2n+1}$ with $\ell_{\widetilde B}(\phi(r_0))=3$ and $\ell_{\widetilde B}(\phi(r_i))=2$ for $i>0$.
In this case, we have $\ell_{\widetilde D}(\phi(w))=2\ell_{\widetilde B}(w)+r(w)$ and substituting $a\mapsto q$ and $q\mapsto q^2$ in Equation \eqref{PoincareSeriesAffineB} yields
$$U_{\widetilde B_n}^{\widetilde D_{2n+1}}(q)=\sum_{w\in\widetilde B_n}q^{\ell_{\widetilde D}(\phi(w))}=\sum_{w\in\widetilde B_n}q^{r(w)}q^{2\ell_{\widetilde B}(w)}=\widetilde B_n(q,q^2)=\dfrac{(-q^3;q^2)_n(-q^2;q^2)_{n-1}[n]_{q^2}!}{(q^{2n+1};q^2)_n}.$$
This completes the proof.
\end{proof}

\begin{proof}[Proof of Theorem \ref{thm:affine_type_C}]
The proof of Theorem \ref{thm:affine_type_C} follows the same arguments as in the proof of Theorem \ref{thm:affine_type_B}.  In this case, we apply Equation \eqref{PoincareSeriesAffineC} to each of the seven foldings of $\widetilde{C}_n$ given in Table \ref{DescriptionAffineFoldingTable2}.  Below, we list the corresponding relationships between length functions and leave the details of the calculations as an exercise.

\begin{enumerate}
\item $\ell_{\widetilde A}(\phi(w))=2\ell_{\widetilde C}(w)+r(w)+s(w)$ \ and \ $U_{\widetilde C_n}^{\widetilde A_{2n+1}}(q)=\widetilde C_n(q,q,q^2)$

\smallskip

\item $\ell_{\widetilde A}(\phi(w))=2\ell_{\widetilde C}(w)+r(w)-s(w)$ \ and \ $U_{\widetilde C_n}^{\widetilde A_{2n}}(q)=\widetilde C_n(q,q^{-1},q^2)$

\smallskip

\item $\ell_{\widetilde A}(\phi(w))=2\ell_{\widetilde C}(w)-r(w)-s(w)$ \ and \ $U_{\widetilde C_n}^{\widetilde A_{2n-1}}(q)=\widetilde C_n(q^{-1},q^{-1},q^2)$

\smallskip

\item $\ell_{\widetilde B}(\phi(w))=\ell_{\widetilde C}(w)+s(w)$ \ and \ $U_{\widetilde C_n}^{\widetilde B_{n+1}}(q)=\widetilde C_n(1,q,q)$

\smallskip

\item $\ell_{\widetilde D}(\phi(w))=\ell_{\widetilde C}(w)+r(w)+s(w)$ \ and \ $U_{\widetilde C_n}^{\widetilde D_{n+2}}(q)=\widetilde C_n(q,q,q)$

\smallskip

\item $\ell_{\widetilde{C}_{2n+1}}(\phi(w))=2\ell_{\widetilde C_{n}}(w)+s(w)$ \ and \ $U_{\widetilde C_n}^{\widetilde C_{2n+1}}(q)=\widetilde C_n(1,q,q^2)$

\smallskip

\item $\ell_{C_{2n}}(\phi(w))=2\ell_{\widetilde C_{n}}(w)-s(w)$ \ and \ $U_{\widetilde C_n}^{\widetilde C_{2n}}(q)=\widetilde C_n(1,q^{-1},q^2)$

\end{enumerate}
\end{proof}

\begin{remark}
If we set $a=0$ in Equation \eqref{PoincareSeriesAffineC}, then we recover the unfolding polynomials of finite type from Theorem \ref{thm:MainTheorem} parts (1)-(3).  In particular, we have 

$$U_{B_n}^{A_{2n-1}}(q)=\widetilde C_n(0,q,q^2),\quad U_{B_n}^{A_{2n}}(q)=\widetilde C_n(0,q^{-1},q^2),\quad U_{B_n}^{D_{n+1}}(q)=\widetilde C_n(0,q,q).$$
More generally, $$\widetilde C_n(0,t,q)=\sum_{w\in B_n} t^{s_B(w)}q^{\ell_B(w)}=(-tq;q)_{n}[n]_q!$$ recovers a well known formula for the $(t,q)$-distribution for the number of signs and inversions on signed permutations.
\end{remark}

\printbibliography

\end{document}